\numberwithin{equation}{section}
\newtheorem{teo}{Theorem}[section]
\newtheorem{lem}[teo]{Lemma}
\theoremstyle{definition}
\newtheorem{rem}[teo]{Remark}
\numberwithin{equation}{section}
\def\a{\alpha}
\def\l{\lambda }
\def\g{\gamma }
\def\o{\omega}
\def\R{\mathbb{R}}
\def\N{\mathbb{N}}
\def\d{\delta}
\def\e{\varepsilon}
\def\t{\theta}
\def\f{\varphi}
\def\s{\sigma}
\def\loc{\operatorname{loc}}
\def\mes{\operatorname{mes}}
\def\dist{\operatorname{dist}}
\def\capa{\operatorname{cap}}
\begin{document}


\title[Limiting relations]{On limiting relations for capacities}

\author[V.I. Kolyada]{V.I. Kolyada}
\address{Department of Mathematics\\
Karlstad University\\
Universitetsgatan 1 \\
651 88 Karlstad\\
SWEDEN} \email{viktor.kolyada@kau.se}

\keywords{Capacity; Moduli of continuity;  Sobolev spaces; Besov
spaces}

\begin{abstract}

The paper is devoted to the study of limiting behaviour of Besov
capacities $\capa (E;B_{p,q}^\a)\,\,\, (0<\a<1)$ of sets in $\R^n$ as $\a\to 1$ or
$\a\to 0.$ Namely, let $E\subset \R^n$ and
$$J_{p,q}(\a,
E)=[\a(1-\a)q]^{p/q}\capa(E;B_{p,q}^\a).$$ It is proved that if $1\le
p<n,\,\,1\le q<\infty,$ and the set $E$ is open, then $J_{p,q}(\a,
E)$ tends to the Sobolev capacity $\capa(E;W_p^1)$ as $\a\to 1$.
This statement fails to hold for compact sets. Further, it is proved
that if the set $E$ is compact and $1\le p,q<\infty$, then
$J_{p,q}(\a, E)$ tends to $2n^p|E|$ as $\a\to 0$ ($|E|$ is the  measure of $E$). For open sets it
is not true.

\end{abstract}

\subjclass[2010]{Primary 46E35; Secondary 31B15}

\maketitle

\date{}

\maketitle

\section{Introduction}

 The Sobolev space
$W_p^1(\R^n)$ ($1\le p<\infty$) is defined as the class  of all
functions $f\in L^p({\mathbb R}^n)$ for which all first-order weak
derivatives $\partial f/\partial x_k=D_kf$ $(k=1,...,n)$ exist and
belong to $L^p({\mathbb R}^n)$.

The classical embedding theorem with limiting exponent states that
if $1\le p< n,$ then for any $f\in W_p^1(\R^n)$
\begin{equation}\label{sobolev}
||f||_{p^*}\le c \sum_{k=1}^n||D_k f||_p, \quad\mbox{where}\quad p^*=\frac{np}{n-p}.
\end{equation}
This theorem was proved by Sobolev in 1938 for $1<p<n$ and by
Gagliardo and Nirenberg in 1958 for $p=1$ (see \cite[Chapter
5]{St}).

Embeddings with limiting exponent are also true for some spaces
 defined in terms of moduli of continuity.

Let $f\in L^p({\mathbb R}^n)$ ($1\le p<\infty$) and
$k\in\{1,...,n\}.$ The partial modulus of continuity of $f$ in $L^p$
with respect to $x_k$ is defined by
$$
\o_k(f;\delta)_p =\sup_{0\le h\le \d}\left(\int_{\mathbb R^n}
|f(x+he_k)-f(x)|^p~dx\right)^{1/p}
$$
($e_k$ is the $k$th unit coordinate vector).

Let $0<\a<1,$  $1\le p<\infty$, $1\le q\le\infty,$ and $k\in\{1,...,n\}.$
 The Nikol'ski\u\i-Besov space
$B^\a_{p,q;k}({\mathbb R}^n)$ consists of all functions $f\in
L^p({\mathbb R}^n)$ such that
$$
\|f\|_{b^\a_{p,q;k}}\equiv
\Big(\int_0^{\infty}\big(t^{-\a}\o_k(f;t)_p\big)^q\,
\frac{dt}{t}\Big)^{1/q}<\infty
$$
if $q<\infty,$ and
$$
\|f\|_{b^\a_{p,\infty;k}}\equiv \sup_{t>0}t^{-\a}\o_k(f;t)_p<\infty
$$
 if $q=\infty.$ Further, set
 $$
 B_{p,q}^\a(\R^n)= \bigcap_{k=1}^n B^\a_{p,q;k}({\mathbb R}^n) \quad  \mbox{and}\quad ||f||_{b^\a_{p,q}}=\sum_{k=1}^n \|f\|_{b^\a_{p,q;k}}.
 $$
We write also $B_{p,p}^\a(\R^n)= B_{p}^\a(\R^n).$

Observe that in these definitions and notations we follow
Nikol'ski\u\i's book \cite{Nik}; they can be immediately extended to anisotropic Nikol'ski\u\i-Besov spaces.

The spaces $B^\a_{p}({\mathbb R}^n)$ are often considered as
  Sobolev spaces of fractional smoothness.
The embedding theorem with limiting exponent for these spaces
asserts that if $0<\a<1$ and $1\le p<n/\a,$ then
\begin{equation}\label{besov10}
B_p^\a(\R^n)\subset L^{p_\a}(\R^n), \quad\mbox{where}\quad p_\a=\frac{np}{n-\a p}.
\end{equation}
This theorem was proved in the late sixties independently by several
authors (for the references, see \cite[\S \, 18]{BIN}, \cite[Section
10]{K1998}).

In 2002 Bourgain, Brezis and Mironescu \cite{BBM2} discovered that
embedding $W_p^1\subset L^{p^*}$ can be obtained as the limit of embedding
(\ref{besov10}) as $\a\to 1.$ First, they proved in \cite{BBM1} that
for any $f\in W_p^1(\R^n)\,\, (1\le p<\infty)$
\begin{equation}\label{limiting1}
\lim_{\a\to 1-}(1-\a)^{1/p}||f||_{b_p^\a}\asymp||\nabla f||_p
\end{equation}
(see also \cite{Br1}, \cite[Section 14.3]{Leo1},
\cite[Section 10.2]{Maz2}). The main result in \cite{BBM2} is the
following:  if $1/2\le \a<1$ and $1\le p<n/\a,$ then for any $f\in
B_p^\a({\mathbb R}^n)$,
\begin{equation}\label{bourg}
\|f\|^p_{L^{p_\a}}\le c_n\frac{1-\a}{(n-\a p)^{p-1}}
\|f\|_{b^\a_p}^p\quad\Big(p_\a=\frac{np}{n-\a p}\Big),
\end{equation}
where a constant $c_n$ depends only on $n$. In view of
(\ref{limiting1}), inequality
(\ref{sobolev}) is a limiting case of (\ref{bourg}) as $\a\to 1-$. The proof of
(\ref{bourg}) in \cite{BBM2} was quite complicated. Afterwards,
Maz'ya and Shaposhnikova \cite{MS} gave a simpler proof of
(\ref{bourg}). Moreover, they studied the limiting behaviour of the
$B_p^\a-$norm and the sharp asymptotics of the embedding constant in
(\ref{besov10})  as $\a\to 0$. More precisely, they proved that
\begin{equation}\label{mazya}
\|f\|^p_{L^{p_\a}}\le c_{p,n}\frac{\a(1-\a)}{(n-\a p)^{p-1}}
\|f\|_{b^\a_p}^p\quad\Big(1\le p< \frac{n}{\a}, \,\,\,p_\a=\frac{np}{n-\a p}\Big).
\end{equation}
Also, it was  shown  in \cite{MS} that if $f\in B_p^{\a_0}({\mathbb
R}^n)$ for some $\a_0\in (0,1),$ then
\begin{equation}\label{mazya1}
\lim_{\a\to 0} \a ||f||_{b^\a_p}^p \asymp ||f||_p^p.
\end{equation}

We note that in the works \cite{BBM2} and \cite{MS} a slightly
different definition of the  seminorm $||\cdot||_{b^\a_p}$ was used;
it is equivalent to the one given above.

Later on, it was observed in \cite{KL} that inequalities
(\ref{bourg}) and (\ref{mazya}) can be directly derived from
estimates of rearrangements obtained in \cite{K1988}.

Different  extensions and some close aspects of these problems have been studied in \cite{CGO}, \cite{KMX},
\cite{KL}, \cite{LS}, \cite{Mil}, \cite{Tri}.

This paper was inspired by the results described above. Namely, it
is devoted to the   study of limiting behaviour of capacities in
spaces
 $B_{p,q}^\a$ as $\a$ tends to 1 or $\a$ tends to 0.

Let $K\subset \R^n$ be a compact set. Denote by $\mathfrak{N}(K)$ the
set of all functions $f\in C_0^\infty(\R^n)$ such that  $f(x)\ge  1$ for all $x\in K.$
The capacity of the set $K$ in the space $W_p^1(\R^n)\,\, (1\le
p<\infty)$ is defined by
\begin{equation}\label{sobolcap}
\capa(K;W_p^1)=\inf\left\{\left(\sum_{k=1}^n||D_k f||_p\right)^p: f\in \mathfrak{N}(K)\right\}
\end{equation}
(see \cite[2.2.1]{Maz2}).

Similarly, let $1\le p,q<\infty$ and $0<\a<1.$ The capacity of a
compact set $K\subset \R^n$ in the space $B_{p,q}^\a(\R^n)$ is
defined by
\begin{equation}\label{besovcap}
\capa(K;B_{p,q}^\a)=\inf\{||f||_{b_{p,q}^\a}^p: f\in \mathfrak{N}(K)\}
\end{equation}
(see \cite{Ad1}, \cite[Section 4]{AH}, \cite[Section 10.4]{Maz2}).
Note that in this definition the $p$th power of the Besov norm is
taken. This assures that the Hausdorff dimension of the set function
$\capa(\cdot; B_{p,q}^\a)$ is equal to $n-\a p$ when $p<n/\a$ (see
\cite{Ad1}).

Let $X$ denote one of the spaces $W_p^1(\R^n)$ or
$B_{p,q}^\a(\R^n).$ Let $G\subset \R^n$ be an open set. Then we
define the capacity of $G$ in $X$ as
$$
\capa(G;X)=\sup\{\capa(K;X): K\subset G, K \,\,\mbox{is compact}\}.
$$

The paper is organized as follows.

In Section 2 we give auxiliary statements which are used in the
sequel.

In Section 3 we prove the  main result of the paper. It states that
if $1\le p<n$ and $1\le q<\infty$, then for any open set $G\subset
\R^n$
\begin{equation}\label{ato1}
\lim_{\a\to 1-}(1-\a)^{p/q} \capa\left(G; B_{p,q}^\a\right)=
\left(\frac1q\right)^{p/q}\capa (G; W_p^1).
\end{equation}
We show  that this statement may fail for a {\it compact} set. If
$n<p<\infty$, $n\in\N,$ or $n=p\ge 2,$ then equality (\ref{ato1}) is
trivially true because in these cases both the sides of
(\ref{ato1}) are equal to zero for any bounded open set $G.$
Furthermore, (\ref{ato1}) also trivially holds for $p=n=1$; in this case both
the sides are equal to $2q^{-1/q}$ for any non-empty open bounded
set $G\subset \R.$

In Section 4 we consider the case $\a\to 0$ and we prove that if
$1\le p, q<\infty$, then for any compact set $K\subset \R^n$
$$
\lim_{\a\to 0+}\a^{p/q}{\capa} (K;B_{p,q}^\a)=2n^p\left(\frac1q\right)^{p/q}|K|
$$
(as usual, $|K|$ denotes the Lebesgue measure of $K$). It is shown that generally this equality is not true for {\it open}
sets.

\section{Auxiliary propositions}

We begin with some  properties of  moduli of continuity.

We shall call {\it modulus of continuity} any non-decreasing,
continuous and bounded function $\o(\d)$ on $[0,+\infty)$ which
satisfies the conditions
\begin{equation}\label{d1}
 \o(\d+\eta)\le \o(\d)+\o(\eta),
\quad \o(0)=0.
\end{equation}

It is well known that for any $f\in L^p({\mathbb R}^n)$ the functions
$\o_j(f;\d)_p$ are moduli of continuity.

For a modulus of continuity $\o$ the function $\o(\d)/\d$ may not be
monotone. Therefore we shall use the following lemma.

\begin{lem}\label{MOD1} Let $\o$ be a modulus of continuity. Set
$$
\overline\o(t)=\frac1t\int_0^t\o(u)\,du,\quad t>0.
$$
Then
\begin{equation}\label{mod3}
\overline\o(t)\le\o(t)\le 2\overline\o(t),\quad t>0.
\end{equation}
Moreover, $\overline\o(t)$ increases and $\overline\o(t)/t$
decreases on $(0,\infty).$
\end{lem}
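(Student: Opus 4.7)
The plan is to establish the two inequalities in \eqref{mod3} first, and then derive the monotonicity assertions for $\overline\o(t)$ and $\overline\o(t)/t$ as immediate consequences via differentiation.

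For the left inequality $\overline\o(t)\le\o(t)$, I would simply invoke monotonicity of $\o$: since $\o(u)\le\o(t)$ for every $u\in[0,t]$, integration gives $\int_0^t\o(u)\,du\le t\o(t)$. For the right inequality $\o(t)\le 2\overline\o(t)$, the key is to exploit the subadditivity condition \eqref{d1}. Writing $\o(t)=\o(u+(t-u))\le\o(u)+\o(t-u)$ for every $u\in[0,t]$ and integrating over $u\in[0,t]$ gives
$$
t\o(t)\le\int_0^t\o(u)\,du+\int_0^t\o(t-u)\,du=2\int_0^t\o(u)\,du,
$$
after a change of variable $v=t-u$ in the second integral. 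Dividing by $t$ yields the desired bound.

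For the monotonicity of $\overline\o$, I would observe that since $\o$ is continuous and bounded, the function $\overline\o$ is differentiable on $(0,\infty)$ with
$$
\overline\o'(t)=\frac{\o(t)}{t}-\frac{1}{t^2}\int_0^t\o(u)\,du=\frac{\o(t)-\overline\o(t)}{t}\ge 0
$$
by the left inequality in \eqref{mod3}; hence $\overline\o$ is non-decreasing. For $\overline\o(t)/t=t^{-2}\int_0^t\o(u)\,du$, a similar computation gives
$$
\frac{d}{dt}\!\left(\frac{\overline\o(t)}{t}\right)=\frac{\o(t)-2\overline\o(t)}{t^2}\le 0
$$
by the right inequality in \eqref{mod3}, so $\overline\o(t)/t$ is non-increasing.

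I do not anticipate a serious obstacle in this lemma; the essential idea is the symmetrization trick $u\mapsto t-u$ that converts subadditivity into the factor $2$ in the averaged inequality, and once both inequalities of \eqref{mod3} are in hand, the monotonicity statements are just the signs of two elementary derivatives. The only small point to keep in mind is that continuity of $\o$ is what makes $\overline\o$ differentiable, so the derivative argument is legitimate.
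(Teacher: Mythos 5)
Your proof is correct and follows essentially the same route as the paper: the same symmetrization trick $u\mapsto t-u$ combined with subadditivity yields the factor $2$, and the sign of $\bigl(\overline\o(t)/t\bigr)'$ is computed identically. The only cosmetic difference is that the paper gets the monotonicity of $\overline\o$ directly from the substitution $\overline\o(t)=\int_0^1\o(tv)\,dv$ rather than by differentiation, which changes nothing of substance since continuity of $\o$ does make $\overline\o$ continuously differentiable on $(0,\infty)$ as you note.
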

\begin{proof} Since
$$
\overline\o(t)=\int_0^1 \o(tv)\,dv
$$
and $\o$ is increasing, it is obvious that $\overline\o$ increases
and the left-hand side inequality in (\ref{mod3}) is true. We prove
the right-hand side inequality in (\ref{mod3}), that is,
\begin{equation}\label{mod4}
\o(t)\le \frac2t\int_0^t\o(u)\,du,\quad t>0.
\end{equation}
We have
$$
\int_0^t\o(u)\,du=\int_0^t\o(t-u)\,du.
$$
Thus, by (\ref{d1}),
$$
2\int_0^t\o(u)\,du=\int_0^t[\o(u)+\o(t-u)]\,du\ge t\o(t).
$$
This implies (\ref{mod4}). Using (\ref{mod4}), we obtain
$$
\left(\frac{\overline\o(t)}{t}\right)'=-\frac{2}{t^3}\int_0^t\o(u)\,du+\frac{\o(t)}{t^2}\le 0.
$$
for almost all $t>0.$ Since $\overline{\o}(t)$ is locally absolutely continuous on $(0,+\infty)$, this implies that $\overline\o(t)/t$ decreases on $(0,+\infty).$
\end{proof}

Now we consider some estimates of partial moduli of continuity.

First, it is obvious that for any $f\in L^p(\R^n)$
$(1\le p<\infty)$
\begin{equation}\label{First}
\o_j(f;\d)_p\le 2||f||_p \quad (j=1,...,n).
\end{equation}
It is easy to show that the constant 2 at the right-hand side is optimal (see Remark 4.3 below). However,
for non-negative functions the constant can be improved. Namely, if $f\in L^p(\R^n)$ and $f(x)\ge 0$, then
\begin{equation}\label{posit}
\o_j(f;\d)_p\le 2^{1/p}||f||_p \quad (j=1,...,n).
\end{equation}
Indeed, let  $h>0$, $j\in\{1,...,n\}$,  and set $E_{h,j}=\{x:
f(x)\ge f(x+he_j)\}$. Then
$$
\begin{aligned}
&\int_{\R^n}|f(x)-f(x+he_j)|^p\,dx\\
&\le \int_{E_{h,j}}f(x)^p\,dx+\int_{\R^n\setminus E_{h,j}}f(x+he_j)^p\,dx\le 2\int_{\R^n}f(x)^p\,dx.
\end{aligned}
$$
This implies (\ref{posit}).

In what follows, for a  set $E\subset \R^n$ we denote by $\chi_E$
its characteristic function. If $E$ is a measurable set of finite
measure, then by (\ref{posit})
\begin{equation}\label{charact}
\o_j(\chi_E;\d)_p\le (2|E|)^{1/p}.
\end{equation}

If a function  $f\in L^p(\R^n)~~(1\le p<\infty)$ has a weak
derivative $D_j f\in L^p(\R^n)$ for some $1\le j\le n,$ then
\begin{equation}\label{ocenka}
\o_j(f;\d)_p\le ||D_j f||_p\d
\end{equation}
(see \cite[\S\, 16]{BIN}). Moreover, by the Hardy-Littlewood theorem
\cite[\S \,4.8]{Nik}, if  $1<p<\infty$ and $f\in L^p(\R^n),$ then
the relation $\o_j(f;\d)_p=O(\d)$ holds if and only if there exists
the weak derivative $D_jf\in L^p(\R^n).$

We shall   also use the
following  well-known statement which we prove for completeness.
\begin{lem}\label{NABLA} Let a function $f\in L^p(\R^n)~~(1\le p<\infty)$
have a weak derivative $D_jf\in L^1_{loc}\R^n$ for some  $j\in\{1,...,n\}.$ Then
\begin{equation}\label{supremum}
||D_j f||_p=\lim_{\d\to 0+}\frac{\o_j(f;\d)_p}{\d}=\sup_{\d>0}\frac{\o_j(f;\d)_p}{\d}.
\end{equation}
\end{lem}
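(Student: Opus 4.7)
The plan is to sandwich $\omega_j(f;\delta)_p/\delta$ between two quantities that both equal $\|D_jf\|_p$, giving simultaneously the supremum and the limit claims.

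The key identity I would establish first is the line representation
\begin{equation*}
f(x+he_j)-f(x)=\int_0^h D_jf(x+te_j)\,dt\qquad\text{for a.e. }x\in\R^n,\ h>0.
\end{equation*}
Since $f\in L^p$ and $D_jf\in L^1_{\loc}$, this follows by standard mollification: the identity is trivial for the smooth approximations $f_\e=f*\f_\e$, and one passes to the limit using $f_\e\to f$ in $L^p_{\loc}$ and $D_jf_\e=(D_jf)*\f_\e\to D_jf$ in $L^1_{\loc}$ (one may also invoke the ACL characterization of weak derivatives).

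From this identity Minkowski's integral inequality gives, for every $0<h\le\d$,
\begin{equation*}
\|f(\cdot+he_j)-f(\cdot)\|_p\le\int_0^h\|D_jf(\cdot+te_j)\|_p\,dt=h\|D_jf\|_p,
\end{equation*}
which is meaningful (and trivial) even if $\|D_jf\|_p=+\infty$. Taking the supremum over $0\le h\le\d$ yields
$$\frac{\o_j(f;\d)_p}{\d}\le\|D_jf\|_p,\qquad\d>0,$$
and hence $\sup_{\d>0}\o_j(f;\d)_p/\d\le\|D_jf\|_p$.

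For the reverse direction, by the Lebesgue differentiation theorem applied on almost every line parallel to $e_j$, the integral representation gives
$$\frac{f(x+he_j)-f(x)}{h}\longrightarrow D_jf(x)\quad\text{as }h\to 0+,\ \text{for a.e. }x\in\R^n.$$
Fatou's lemma then yields
$$\|D_jf\|_p^p\le\liminf_{h\to 0+}\int_{\R^n}\Big|\frac{f(x+he_j)-f(x)}{h}\Big|^p\,dx\le\liminf_{h\to 0+}\Big(\frac{\o_j(f;h)_p}{h}\Big)^p.$$
Chaining the two bounds,
$$\|D_jf\|_p\le\liminf_{\d\to 0+}\frac{\o_j(f;\d)_p}{\d}\le\limsup_{\d\to 0+}\frac{\o_j(f;\d)_p}{\d}\le\sup_{\d>0}\frac{\o_j(f;\d)_p}{\d}\le\|D_jf\|_p,$$
forces equality throughout, proving both \eqref{supremum} assertions simultaneously (and showing that if the supremum is finite then $D_jf\in L^p$).

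The only genuinely delicate point is the passage from the weak derivative hypothesis ($D_jf\in L^1_{\loc}$) to the absolutely continuous representation on a.e. line, since the clean Minkowski estimate and the Fatou argument both rest on it. Once that representation is in hand, the rest is a direct chaining of inequalities.
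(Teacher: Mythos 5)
Your proposal is correct and follows essentially the same route as the paper: reduce to the ACL representation of $f$ on almost every line parallel to the $x_j$-axis, obtain the upper bound $\o_j(f;\d)_p\le\|D_jf\|_p\,\d$ (which the paper simply cites as inequality (\ref{ocenka}) from \cite{BIN}, while you derive it from the line integral representation via Minkowski's integral inequality), and obtain the lower bound from a.e.\ convergence of the difference quotients together with Fatou's lemma. The chaining of the resulting inequalities is identical to the paper's conclusion.
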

\begin{proof}
The function $f$ can be modified on a set of measure zero so that the modified function is locally absolutely continuous on almost all straight lines parallel to the $x_j-$axis,
and its usual derivative with respect to $x_j$ coincides almost everywhere on $\R^n$ with $D_jf$ (see \cite[Chapter 4]{Nik}). We assume that $f$ itself has this property. Then
$$
\frac{f(x+he_j)-f(x)}{h}\to D_j f(x)\quad \mbox{as}\quad h\to 0
$$
almost everywhere on $\R^n$. Thus, by Fatou's Lemma,
$$
\begin{aligned}
&\left(\int_{\R^n}|D_j f(x)|^p\,dx\right)^{1/p}\\
&\le \varliminf_{h\to 0+}\left(h^{-p}\int_{\R^n}|f(x+he_j)-f(x)|^p\,dx\right)^{1/p}
\le \varliminf_{h\to 0+} \frac{\o_j(f;h)_p}{h}.
\end{aligned}
$$
On the other hand, by (\ref{ocenka})
$$
||D_j f||_p\ge \sup_{h>0}\frac{\o_j(f;h)_p}{h}\ge \varlimsup_{h\to 0+} \frac{\o_j(f;h)_p}{h}.
$$
 These inequalities yield  (\ref{supremum}).
\end{proof}
\begin{rem}\label{REMA} As we have observed above, for a modulus of continuity $\o$ the
function $\o(\d)/\d$ may not be monotone. However, it is not difficult to show that for
any modulus of continuity $\o$
$$
\lim_{\d\to 0+}\frac{\o(\d)}{\d}=\sup_{\d>0}\frac{\o(\d)}{\d}.
$$
\end{rem}

Now we derive some estimates involving Besov norms. First, we have the following  lemma which we shall often use in the sequel.
\begin{lem}\label{partition} Assume that a function $f\in L^p(\R^n)$ ($1\le p<\infty)$ has a weak derivative $D_jf\in L^p(\R^n)$ for some  $j\in \{1,...,n\}.$
Then $f\in B_{p,q;j}^\a(\R^n)$ for any $1\le q<\infty$ and any $0<\a<1.$ Moreover,
$$
\|f\|_{b^\a_{p,q;j}}
\le q^{-1/q}\left[(1-\a)^{-1/q}T^{1-\a}||D_jf||_p+2\a^{-1/q}T^{-\a }||f||_p\right]
$$
for any $T>0$.
\end{lem}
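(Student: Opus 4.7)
The plan is to split the defining integral of $\|f\|_{b^\a_{p,q;j}}$ at $t=T$ and estimate the two pieces by completely different mechanisms. Concretely, write
$$
\|f\|_{b^\a_{p,q;j}}^q=\int_0^T\bigl(t^{-\a}\o_j(f;t)_p\bigr)^q\,\frac{dt}{t}+\int_T^\infty\bigl(t^{-\a}\o_j(f;t)_p\bigr)^q\,\frac{dt}{t}=:I_1+I_2.
$$

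For $I_1$, since $D_j f\in L^p(\R^n)$ is assumed, inequality (\ref{ocenka}) gives $\o_j(f;t)_p\le \|D_jf\|_p\,t$ for all $t>0$. Hence
$$
I_1\le \|D_jf\|_p^q\int_0^T t^{q(1-\a)-1}\,dt=\frac{\|D_jf\|_p^q\,T^{q(1-\a)}}{q(1-\a)}.
$$
For $I_2$, I would use the crude but universal estimate (\ref{First}), namely $\o_j(f;t)_p\le 2\|f\|_p$, which is integrable against $t^{-\a q-1}$ on $(T,\infty)$ because $\a>0$. This yields
$$
I_2\le (2\|f\|_p)^q\int_T^\infty t^{-\a q-1}\,dt=\frac{(2\|f\|_p)^q\,T^{-\a q}}{\a q}.
$$

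Adding the two bounds, taking $q$-th roots and using the elementary subadditivity $(A+B)^{1/q}\le A^{1/q}+B^{1/q}$ (valid for $q\ge 1$ and $A,B\ge 0$), the factor $q^{-1/q}$ factors out and one recovers exactly the stated inequality. The fact that $\|f\|_{b^\a_{p,q;j}}<\infty$ (i.e.\ membership in $B_{p,q;j}^\a$) is then immediate by applying the inequality with any fixed $T>0$.

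There is no real obstacle: the only point worth a moment's care is that the integral over $(0,T]$ is finite because $q(1-\a)>0$, and the integral over $(T,\infty)$ is finite because $\a q>0$, so the bound requires strictly $0<\a<1$, consistent with the hypothesis. The step $(A+B)^{1/q}\le A^{1/q}+B^{1/q}$ is routine for $q\ge 1$. Everything else is a direct substitution of the two pointwise bounds (\ref{ocenka}) and (\ref{First}) into the Besov seminorm integral.
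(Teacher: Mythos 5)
Your proof is correct and follows essentially the same route as the paper: split the seminorm integral at $t=T$, use (\ref{ocenka}) on $(0,T)$ and (\ref{First}) on $(T,\infty)$, and combine via the subadditivity of $x\mapsto x^{1/q}$ (the paper phrases this last step as the triangle inequality in $L^q(dt/t)$, which is the same estimate). The resulting constants match the statement exactly.
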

\begin{proof} Applying estimates (\ref{First}) and (\ref{ocenka}), we obtain for $T>0$
$$
\begin{aligned}
\|f\|_{b^\a_{p,q;j}}&\le \left(\int_0^T t^{-\a q}\o_j(f;t)_p^q\frac{dt}{t}\right)^{1/q}
+\left(\int_T^\infty t^{-\a q}\o_j(f;t)_p^q\frac{dt}{t}\right)^{1/q}\\
&\le ||D_j f||_p\left(\int_0^T t^{(1-\a)q}\frac{dt}{t}\right)^{1/q}
+2||f||_p\left(\int_1^\infty t^{-\a q}\frac{dt}{t}\right)^{1/q}\\
&=q^{-1/q}(1-\a)^{-1/q}T^{1-\a}||D_jf||_p+2(\a q)^{-1/q}T^{-\a}||f||_p.
\end{aligned}
$$
\end{proof}

It is  well known that for fixed $\a\in (0,1)$ and $p\in [1,
\infty)$ the Besov spaces $B^\a_{p,q}(\R^n)$ increase as the second
index $q$ increases. Moreover,  the following estimate holds: if
$1\le p<\infty,$ $1\le q<\t\le\infty,$ and $0<\a<1,$ then for any
function $f\in L^p({\mathbb R^n})$ and any $j=1,...,n$
\begin{equation}\label{besov}
\|f\|_{b^\a_{p,\t;j}}\le
8[\a(1-\a)]^{1/q-1/\t}\|f\|_{b^\a_{p,q;j}}
\end{equation}
(see \cite[Lemma 2.2]{K2006}).
The constant coefficient at the right-hand side has optimal order as $\a\to 1$ or $\a\to 0.$
However, the value of this coefficient can be improved. First, for "small" $\a$ we have the following result.
\begin{lem}\label{BESOV1} Let $1\le p<\infty,$ $1\le q<\t\le\infty,$ and $0<\a<1.$ Then
for any function $f\in L^p({\mathbb R^n})$ and any $j=1,...,n$
\begin{equation}\label{besov11}
\|f\|_{b^\a_{p,\t;j}}\le
(\a q)^{1/q-1/\t}\|f\|_{b^\a_{p,q;j}}.
\end{equation}
\end{lem}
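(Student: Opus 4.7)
The plan is a two-step argument: first establish the pointwise control $t^{-\a}\o_j(f;t)_p\le (\a q)^{1/q}\|f\|_{b^\a_{p,q;j}}$ (i.e.\ the case $\t=\infty$), and then use a standard $L^q$–$L^\infty$ interpolation against the measure $dt/t$ to cover all intermediate $\t$.

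For the first step, write $\o(t)=\o_j(f;t)_p$. Since $\o$ is a modulus of continuity, it is non-decreasing, so for every fixed $t>0$ and every $s\ge t$ we have $\o(s)\ge \o(t)$. Integrating $s^{-\a q}\o(s)^q\,ds/s$ over $[t,\infty)$ therefore yields
$$
\|f\|_{b^\a_{p,q;j}}^q\ge \int_t^\infty s^{-\a q}\o(s)^q\,\frac{ds}{s}\ge \o(t)^q\int_t^\infty s^{-\a q}\,\frac{ds}{s}=\frac{t^{-\a q}\o(t)^q}{\a q},
$$
and consequently
$$
t^{-\a}\o(t)\le (\a q)^{1/q}\|f\|_{b^\a_{p,q;j}}\qquad \text{for every } t>0.
$$
Taking the supremum in $t$ gives the desired estimate in the case $\t=\infty$.

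For the second step, fix $q<\t<\infty$ and apply the trivial pointwise bound
$$
(t^{-\a}\o(t))^\t=(t^{-\a}\o(t))^{\t-q}\cdot (t^{-\a}\o(t))^q\le \|f\|_{b^\a_{p,\infty;j}}^{\t-q}(t^{-\a}\o(t))^q.
$$
Integrating against $dt/t$ and inserting the bound from the first step yields
$$
\|f\|_{b^\a_{p,\t;j}}^\t\le \|f\|_{b^\a_{p,\infty;j}}^{\t-q}\|f\|_{b^\a_{p,q;j}}^q\le (\a q)^{(\t-q)/q}\|f\|_{b^\a_{p,q;j}}^\t,
$$
and taking the $\t$th root gives exactly $(\a q)^{1/q-1/\t}\|f\|_{b^\a_{p,q;j}}$, which is (\ref{besov11}).

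I do not expect a genuine obstacle here; the only delicate point is making sure one exploits the monotonicity of $\o$ (rather than of $\o(t)/t$, which can fail, cf.\ Remark \ref{REMA}) to obtain the sharp coefficient $(\a q)^{1/q}$ in the $L^\infty$-type bound. Once this step is done with the right exponent, the interpolation step is automatic.
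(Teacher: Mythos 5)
Your proof is correct and follows essentially the same route as the paper: the case $\t=\infty$ is obtained by restricting the integral to $[t,\infty)$ and using the monotonicity of $\o_j(f;\cdot)_p$, and the intermediate $\t$ are handled by the same $L^q$--$L^\infty$ factorization against $dt/t$. No discrepancies.
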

\begin{proof} Indeed, for any $\d>0$ and any $j\in\{1,...,n\}$,
$$
\begin{aligned}
&\a||f||_{b^\a_{p,q;j}}^q \ge \a\int_\d^\infty t^{-\a q}\o_j(f;t)_p^q\,\frac{dt}{t}\\
&\ge \o_j(f;\d)_p^q\, \a \int_\d^\infty t^{-\a q}\,\frac{dt}{t}=\frac{1}{q}\d^{-\a q}\o_j(f;\d)_p^q.
\end{aligned}
$$
Thus, we obtain (\ref{besov11}) for $\t=\infty.$  From here, for any
 $\t\in (q,\infty),$ we get
$$
\begin{aligned}
&\|f\|_{b^\a_{p,\t;j}}^\t = \int_0^\infty t^{-\a
\theta}\o_j(f;t)_p^\theta\,\frac{dt}{t}\\
&\le
\|f\|_{b^\a_{p,\infty;j}}^{\t-q}\int_0^\infty t^{-\a
q}\o_j(f;t)_p^q\,\frac{dt}{t}
 \le
(\a q)^{(\t-q)/q}\|f\|_{b^\a_{p,q;j}}^\t.
\end{aligned}
$$
This yields (\ref{besov11}).
\end{proof}

The following lemma plays an essential role in the case $\a\to 1-0.$
\begin{lem}\label{BESOV2} Let $1\le p<\infty,$  $1\le q<\t\le\infty,$ and $0<\a<1.$ Then
for any function $f\in L^p({\mathbb R^n})$ and any $j=1,...,n$
\begin{equation}\label{besov21}
\|f\|_{b^\a_{p,\t;j}}\le
[(1-\a)q]^{1/q-1/\t}\left(\frac{2}{1+\a}\right)^{1-q/\t}\|f\|_{b^\a_{p,q;j}}.
\end{equation}
\end{lem}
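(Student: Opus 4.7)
The plan is to establish the endpoint case $\t=\infty$ first and then recover the general $\t$ by a short interpolation. The key device will be the averaged modulus of continuity
\[
\overline\o_j(f;t)_p = \frac1t\int_0^t \o_j(f;u)_p\,du = \int_0^1 \o_j(f;ts)_p\,ds
\]
from Lemma \ref{MOD1}. Two of its properties will be used: the pointwise bound $\o_j(f;t)_p \le 2\overline\o_j(f;t)_p$, and the fact that $\overline\o_j(f;t)_p/t$ is non-increasing. The crucial point is that the improvement of the constant from $2$ (which one would get from a direct substitution $\o\le 2\overline\o$) to $2/(1+\a)$ will be manufactured by feeding the integral representation $\overline\o(t)=\int_0^1 \o(ts)\,ds$ into Minkowski's integral inequality \emph{before} invoking the pointwise comparison.

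For $\t=\infty$: Minkowski's integral inequality on $(0,\infty)$ with respect to the scale-invariant measure $dt/t$, followed by the change of variables $u = ts$ in the inner integral, gives
\[
\Big(\int_0^\infty t^{-\a q}\overline\o_j(f;t)_p^q\,\frac{dt}{t}\Big)^{1/q}
\le \int_0^1 \Big(\int_0^\infty t^{-\a q}\o_j(f;ts)_p^q\,\frac{dt}{t}\Big)^{1/q} ds
= \int_0^1 s^\a\,ds\cdot \|f\|_{b^\a_{p,q;j}}
= \frac{1}{1+\a}\|f\|_{b^\a_{p,q;j}}.
\]
Next, for any $\d>0$, the monotonicity of $\overline\o_j(f;t)_p/t$ yields $\overline\o_j(f;t)_p \ge t\,\overline\o_j(f;\d)_p/\d$ on $(0,\d]$, so
\[
\int_0^\d t^{-\a q}\overline\o_j(f;t)_p^q\,\frac{dt}{t}
\ge \Big(\frac{\overline\o_j(f;\d)_p}{\d}\Big)^q\int_0^\d t^{(1-\a)q-1}\,dt
= \frac{\overline\o_j(f;\d)_p^q\,\d^{-\a q}}{(1-\a)q}.
\]
Combining these two displays with $\o_j(f;\d)_p \le 2\overline\o_j(f;\d)_p$ from Lemma \ref{MOD1} gives
\[
\d^{-\a}\o_j(f;\d)_p \le \frac{2[(1-\a)q]^{1/q}}{1+\a}\|f\|_{b^\a_{p,q;j}},
\]
and taking the supremum over $\d>0$ proves \eqref{besov21} in the case $\t=\infty$.

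For $q<\t<\infty$, I would use the elementary pointwise interpolation
\[
t^{-\a\t}\o_j(f;t)_p^\t = \bigl(t^{-\a}\o_j(f;t)_p\bigr)^{\t-q}\cdot t^{-\a q}\o_j(f;t)_p^q
\le \|f\|_{b^\a_{p,\infty;j}}^{\t-q}\cdot t^{-\a q}\o_j(f;t)_p^q,
\]
integrate in $dt/t$, and extract a $\t$-th root to obtain $\|f\|_{b^\a_{p,\t;j}}\le \|f\|_{b^\a_{p,\infty;j}}^{1-q/\t}\|f\|_{b^\a_{p,q;j}}^{q/\t}$, then substitute the $\t=\infty$ estimate just proved. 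The exponents recombine cleanly as $(1-q/\t)/q = 1/q-1/\t$, producing exactly \eqref{besov21}.

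The main obstacle is identifying where the sharpening $2\rightsquigarrow 2/(1+\a)$ has to come from. The most natural approach --- replacing $\o$ by $2\overline\o$ in the Besov integral and then exploiting the monotonicity of $\overline\o(t)/t$ --- loses a factor of $2$ and gives only the weaker constant $2[(1-\a)q]^{1/q}$. The improvement requires instead processing $\overline\o$ through Minkowski's integral inequality with respect to the scale-invariant measure $dt/t$: the substitution $u=ts$ is absorbed exactly by the weight, leaving behind precisely $\int_0^1 s^\a\,ds=1/(1+\a)$. Once this observation is made, the remaining steps are routine.
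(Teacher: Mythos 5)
Your proof is correct and follows essentially the same route as the paper: pass to the averaged modulus $\overline\omega(t)=\frac1t\int_0^t\omega_j(f;u)_p\,du$, establish the weighted Hardy-type bound with constant $1/(1+\alpha)$, exploit the monotonicity of $\overline\omega(t)/t$ from Lemma \ref{MOD1} together with $\omega_j(f;\delta)_p\le 2\overline\omega(\delta)$ to get the $\theta=\infty$ case, and then interpolate pointwise for finite $\theta$ exactly as in Lemma \ref{BESOV1}. The only cosmetic difference is that you derive the Hardy inequality inline via Minkowski's integral inequality and the substitution $u=ts$, whereas the paper cites it from Bennett--Sharpley.
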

\begin{proof} Fix $j\in\{1,...,n\}$ and
 set
$$
\overline \o(t)=\frac1t\int_0^t\o_j(f;u)_p\,du,\quad t>0.
$$
By Hardy's inequality \cite[p. 124]{BS},
$$
\int_0^\infty t^{-\a
q}\overline\o(t)^q\frac{dt}{t}\le\frac1{(1+\a)^q}\int_0^\infty t^{-\a
q}\o_j(f;t)_p^q\frac{dt}{t}.
$$
Using this estimate, we have
$$
\begin{aligned}
\|f\|_{b^\a_{p,q;j}}^q&=\int_0^\infty t^{-\a q}\o_j(f;t)_p^q\frac{dt}{t}\\
&\ge (1+\a)^q\int_0^\infty t^{-\a
q}\overline\o(t)^q\frac{dt}{t}\\
&\ge (1+\a)^q\int_0^\d t^{(1-\a)q}\left(\frac{\overline\o(t)}{t}\right)^q\frac{dt}{t}
\end{aligned}
$$
for any $\d>0.$ By Lemma \ref{MOD1}, $\overline\o(t)/t$ decreases on
$(0,+\infty).$ Hence,
$$
\begin{aligned}
 (1-\a)\|f\|_{b^\a_{p,q;j}}^q &\ge (1+\a)^q(1-\a)\left(\frac{\overline\o(\d)}{\d}\right)^q\int_0^\d t^{(1-\a)q}\frac{dt}{t}\\
&=\frac{(1+\a)^q}{q}\d^{-\a q}\overline\o(\d)^q, \quad\d>0.
\end{aligned}
$$
By (\ref{mod3}), $\o_j(f;\d)_p\le 2\overline\o(\d),$ and thus we
obtain
$$
(1-\a)\|f\|_{b^\a_{p,q;j}}^q\ge\frac1q\left(\frac{1+\a}{2}\right)^q\d^{-\a
q}\o_j(f;\d)_p^q, \quad\d>0.
$$
This implies inequality (\ref{besov21}) for $\t=\infty.$ In the case $\t<\infty$ this inequality follows as in the proof of Lemma \ref{BESOV1}.
\end{proof}

Next, we consider  some estimates of {\it distribution functions.}

For any measurable function  $f$ on $\mathbb{R}^n$, denote
$$
\lambda_f (y) = |\{x \in \mathbb{R}^n : |f(x)|>y \}|, \quad y>0.
$$
Let $S_0(\mathbb{R}^n)$ be the class of all measurable and almost
everywhere finite functions $f$ on $\mathbb{R}^n$ such that
$\lambda_f (y)< \infty$  for each $y>0$.

{\it A non-increasing rearrangement} of a function $f \in
S_0(\mathbb{R}^n)$ is a non-increasing function $f^*$ on $(0, +
\infty)$ such that for any $y>0$
$$
|\{t>0: f^*(t)>y\}|= \lambda_f (y).
$$
We shall assume in addition that the rearrangement $f^*$ is left
continuous on $(0,\infty).$ Under this condition it is defined
uniquely by
$$
f^*(t)=\inf\{y>0: \lambda_f (y)<t\}, \quad 0<t<\infty.
$$
It follows that
\begin{equation}\label{distrib}
f^*(\l_f(y))\ge y \quad\mbox{for any} \quad y\ge 0.
\end{equation}

Set also
$$
f^{**}(t)=\frac1t\int_0^t f^*(u)\,du.
$$
For any $f\in S_0(\R^n)$
\begin{equation}\label{deriv}
f^{**}(t)=\int_t^\infty \frac{f^{**}(u)-f^*(u)}{u}\,du,\quad t>0.
\end{equation}

If $f\in S_0(\R^n)$ is locally integrable and has all weak
derivatives $D_k f \in L^1_{\loc}~~(k=1,...,n),$
then
\begin{equation}\label{estim1}
f^{**}(t) - f^*(t)\le n~t^{1/n}\sum_{k=1}^n(D_k f)^{**}(t) \quad (t>0)
\end{equation}
(see \cite[Lemma 5.1]{K1989}, \cite[Lemma 3.1]{K2007}).

\begin{lem}\label{MEASURE1} Let $f\in W_p^1(\R^n),\,\, 1\le p<n$,
and let $p^*=np/(n-p).$ Then
\begin{equation}\label{measure1}
\lambda_f(y)\le c_{p,n}\left(\sum_{k=1}^n||D_k f||_p\right)^{p^*}y^{-p^*}, \quad y>0.
\end{equation}
\end{lem}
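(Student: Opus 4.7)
The estimate is a weak-type $(p, p^*)$ version of the Sobolev embedding \eqref{sobolev}, so there are two natural routes. The cleanest one is to combine \eqref{sobolev} with Chebyshev's inequality: for any $y > 0$,
$$
y^{p^*} \lambda_f(y) \le \int_{\{|f| > y\}} |f|^{p^*}\,dx \le \|f\|_{p^*}^{p^*} \le c^{p^*}\Big(\sum_{k=1}^n \|D_k f\|_p\Big)^{p^*},
$$
which gives \eqref{measure1} with $c_{p,n} = c^{p^*}$. This is a one-line argument, but it depends on invoking \eqref{sobolev} as a black box.

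A more self-contained alternative, which fits the rearrangement machinery just set up, goes through \eqref{estim1} and \eqref{deriv}. First I would note that for each $k$, since $D_k f \in L^p(\R^n)$, Hölder's inequality yields
$$
(D_k f)^{**}(u) = \frac{1}{u}\int_0^u (D_k f)^*(s)\,ds \le u^{-1/p} \|D_k f\|_p.
$$
Substituting into \eqref{estim1} and then into \eqref{deriv}, I would bound
$$
f^{**}(t) = \int_t^\infty \frac{f^{**}(u) - f^*(u)}{u}\,du \le n \sum_{k=1}^n \|D_k f\|_p \int_t^\infty u^{1/n - 1/p - 1}\,du.
$$
Since $1/p - 1/n = 1/p^*$ and $p < n$, the integral converges to $p^*\, t^{-1/p^*}$, so
$$
f^*(t) \le f^{**}(t) \le n p^* \sum_{k=1}^n \|D_k f\|_p \cdot t^{-1/p^*}.
$$

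To conclude, I would apply \eqref{distrib}: taking $t = \lambda_f(y)$ gives $y \le f^*(\lambda_f(y)) \le n p^* \sum_k \|D_k f\|_p \cdot \lambda_f(y)^{-1/p^*}$, and raising to the power $p^*$ produces the desired inequality with $c_{p,n} = (np^*)^{p^*}$. Neither approach involves a real obstacle; the only care needed is ensuring $(D_k f)^{**}$ is controlled by the $L^p$-norm and checking that the integral in $u$ converges, which is precisely what the condition $p < n$ secures.
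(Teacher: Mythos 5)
Your proposal is correct and takes essentially the same route as the paper: the paper likewise remarks that \eqref{measure1} follows from the strong-type inequality \eqref{sobolev}, but proves it directly by combining \eqref{deriv}, \eqref{estim1} and \eqref{distrib}, exactly as in your second argument. The only cosmetic difference is that you apply H\"older's inequality directly to $(D_k f)^{**}(u)\le u^{-1/p}\|D_k f\|_p$ before integrating, whereas the paper first rewrites $\int_t^\infty u^{1/n-1}(D_k f)^{**}(u)\,du$ as two integrals involving $(D_k f)^*$ and applies H\"older to each; both yield the same $t^{-1/p^*}$ bound for $f^*(t)$.
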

\begin{proof}
Of course, this weak-type inequality follows from the strong-type
inequality (\ref{sobolev}). However, (\ref{measure1}) is a direct
consequence of the estimate (\ref{estim1}). Indeed, by (\ref{deriv})
and (\ref{estim1}),
$$
\begin{aligned}
&f^*(t) \le f^{**}(t)\le n\sum_{k=1}^n\int_t^\infty
u^{1/n-1}(D_k f)^{**}(u)\,du\\
&=nn'\sum_{k=1}^n\left[t^{1/n-1}\int_0^t (D_k f)^*(u)\,du + \int_t^\infty
u^{1/n-1}(D_k f)^*(u)\,du\right].
\end{aligned}
$$
 Applying H\"older inequality to  both the
integrals at the right-hand side, we have
$$
f^*(t)\le c\, t^{-1/p^*} \sum_{k=1}^n||D_k f||_p.
$$
Setting $t=\l_f(y)$ and taking into account (\ref{distrib}), we get
(\ref{measure1}).
\end{proof}

Similarly, estimates of distribution functions in terms of moduli of
continuity can be derived from the following inequality: for any $f\in L^p(\R^n)\,\,\,(1\le p<\infty)$
\begin{equation}\label{estim2}
f^{**}(t) - f^*(t)\le 2 t^{-1/p}\sum_{k=1}^n\o_k(f; t^{1/n})_p.
\end{equation}
This  inequality was first proved by  Ul'yanov \cite{U} in the
one-dimensional case (see \cite[p. 148]{K1998} for an alternative
proof). For all $n\ge 1$ it was proved in \cite{K1975}; a simpler proof was given in
\cite[Theorem 1]{K1988}.

\begin{lem}\label{MEASURE2} Let $0<\a<1,$  $1\le p< n/\a,$ and
$p_\a=np/(n-\a p).$ Then for any function $f\in L^p(\R^n)$
\begin{equation}\label{measure2}
\lambda_f(y)\le (2p_\a)^{p_\a}||f||_{b_{p,\infty}^\a}^{p_\a}y^{-p_\a}, \quad y>0.
\end{equation}
\end{lem}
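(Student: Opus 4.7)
The plan is to adapt the argument of Lemma \ref{MEASURE1}, replacing the gradient-based rearrangement estimate (\ref{estim1}) with the Ul'yanov--type estimate (\ref{estim2}) that bounds $f^{**}(t)-f^*(t)$ in terms of the partial moduli of continuity.

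First, I would combine identity (\ref{deriv}) with (\ref{estim2}) to obtain, for every $t>0$,
$$
f^*(t)\le f^{**}(t)=\int_t^\infty \frac{f^{**}(u)-f^*(u)}{u}\,du\le 2\int_t^\infty u^{-1/p-1}\sum_{k=1}^n \o_k(f;u^{1/n})_p\,du.
$$
Next, by the definition of the $b_{p,\infty}^\a$ seminorm, $\o_k(f;\d)_p\le \|f\|_{b^\a_{p,\infty;k}}\d^\a$, so summing in $k$ gives
$$
\sum_{k=1}^n \o_k(f;u^{1/n})_p\le u^{\a/n}\|f\|_{b^\a_{p,\infty}}.
$$
Substituting this into the previous bound, and noting that the exponent in the integrand becomes $-1/p+\a/n-1=-1/p_\a-1$ (since $1/p-\a/n=(n-\a p)/(np)=1/p_\a$), I get
$$
f^*(t)\le 2\|f\|_{b^\a_{p,\infty}}\int_t^\infty u^{-1/p_\a-1}\,du=2p_\a\,\|f\|_{b^\a_{p,\infty}}\,t^{-1/p_\a}.
$$
The integral converges precisely because $p<n/\a$ forces $1/p_\a>0$.

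Finally, I would set $t=\lambda_f(y)$ and invoke (\ref{distrib}) to conclude
$$
y\le f^*(\lambda_f(y))\le 2p_\a\,\|f\|_{b^\a_{p,\infty}}\,\lambda_f(y)^{-1/p_\a},
$$
which, after raising to the $p_\a$-th power, yields the asserted inequality (\ref{measure2}). I do not anticipate any real obstacle here: the estimate is essentially a direct consequence of (\ref{estim2}) together with the $B^\a_{p,\infty}$ definition, and the only point to double-check is the bookkeeping of the exponent $1/p-\a/n=1/p_\a$ and the convergence of the tail integral.
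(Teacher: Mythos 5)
Your proof is correct and follows essentially the same route as the paper: both combine (\ref{deriv}) with the Ul'yanov-type estimate (\ref{estim2}), insert the bound $\sum_k\o_k(f;u^{1/n})_p\le u^{\a/n}\|f\|_{b^\a_{p,\infty}}$, evaluate the resulting tail integral to get $f^*(t)\le 2p_\a\|f\|_{b^\a_{p,\infty}}t^{-1/p_\a}$, and conclude via $t=\l_f(y)$ and (\ref{distrib}). The exponent bookkeeping $1/p-\a/n=1/p_\a$ is exactly as in the paper, so nothing further is needed.
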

\begin{proof} We have
$$
\sum_{k=1}^n\o_k(f;t)_p\le t^\a ||f||_{b_{p,\infty}^\a}\quad\mbox{for any}\quad t\ge 0.
$$
 Thus, by (\ref{deriv}) and (\ref{estim2}),
$$
\begin{aligned}
&f^*(t)\le f^{**}(t)\le 2\sum_{k=1}^n\int_t^\infty u^{-1/p}\o_k(f;u^{1/n})_p\,\frac{du}{u}\\
&\le 2||f||_{b_{p,\infty}^\a}\int_t^\infty u^{-1/p+\a/n}\,\frac{du}{u}=2p_\a||f||_{b_{p,\infty}^\a} t^{-1/p_\a}.
\end{aligned}
$$
Setting  $t=\l_f(y)$ and applying (\ref{distrib}), we obtain
(\ref{measure2}).
\end{proof}

We shall use the following notations. For any $x=(x_1,...,x_n)\in
\R^n$ we denote by $\widehat{x}_k$ the $(n-1)-$dimensional vector
obtained from the $n$-tuple $x$ by removal of its $k$th coordinate.
Let $E\subset \R^n.$ For every $k=1,...,n,$ denote by $\Pi_k(E)$ the
orthogonal projection of $E$ onto the coordinate hyperplane $x_k=0.$
If $E$ is a set of the type $F_\s$, then all its projections $\Pi_k(E)$ are
sets of the type $F_\s$ in $\R^{n-1}$ and therefore they are
measurable in $\R^{n-1}$. The $(n-1)-$dimensional measure of the
projection $\Pi_k(E)$ will be denoted by $\mes_{n-1}\Pi_k(E)$. For the
$n-$dimensional measure of the set $E$ we keep the usual notation
$|E|.$ As above, by $e_k$ we denote the $k$th unit coordinate vector.

\begin{lem}\label{TRANSLATION} Let $\mu, \,\l,$ and $\eta$ be positive numbers and let $n\in \N.$
Then for any set $E\subset \R^n$ of the type $F_\s$, satisfying the
conditions
\begin{equation}\label{transl1}
|E|\le \mu\quad \mbox{and}\quad \mes_{n-1}\Pi_k (E)\ge \l\quad (k=1,...,n),
\end{equation}
there exists $0<h\le 2\mu^2n/(\l \eta)$ such that
\begin{equation}\label{translation}
\sum_{k=1}^n|\{x\in E: x+he_k\in E\}|<\eta.
\end{equation}
\end{lem}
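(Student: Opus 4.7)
The plan is to define $g_k(h) := |\{x\in E: x+he_k\in E\}|$ for $h>0$, $k=1,\dots,n$, and to prove by averaging that
$$
\frac{1}{H}\int_0^H \sum_{k=1}^n g_k(h)\,dh < \eta
\quad\text{with}\quad H = \frac{2\mu^2 n}{\lambda\eta},
$$
from which Chebyshev immediately yields an $h^*\in(0,H]$ with $\sum_k g_k(h^*)<\eta$.

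To control $\int_0^H g_k(h)\,dh$, I would slice $E$ along fibers parallel to the $x_k$-axis. For $\widehat{x}_k\in\R^{n-1}$, let
$E_k(\widehat{x}_k)=\{t\in\R:(\widehat{x}_k \text{ with }t \text{ inserted in slot }k)\in E\}$
and $L_k(\widehat{x}_k)=|E_k(\widehat{x}_k)|$. Since $E$ is $F_\sigma$, all fibers are measurable Borel sets, the support of $L_k$ coincides (up to null sets) with $\Pi_k(E)$, and Fubini yields
$$
g_k(h)=\int_{\Pi_k(E)}|E_k(\widehat{x}_k)\cap(E_k(\widehat{x}_k)-h)|\,d\widehat{x}_k,\qquad \int_{\Pi_k(E)}L_k\,d\widehat{x}_k=|E|\le\mu.
$$
Exchanging the order of integration and using the one-dimensional identity $\int_0^\infty|F\cap(F-h)|\,dh=|F|^2/2$ for any measurable $F\subset\R$, one obtains
$$
\int_0^H g_k(h)\,dh \;\le\; \frac{1}{2}\int_{\Pi_k(E)} L_k(\widehat{x}_k)^2\,d\widehat{x}_k.
$$

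The decisive step is to bound $\int_{\Pi_k(E)}L_k^2\,d\widehat{x}_k$ by $2\mu^2/\lambda$ using only $\int L_k\le\mu$ and $\mes\text{-supp}(L_k)\ge\lambda$. After this is in hand, summing over $k$ gives $\sum_k\int_0^H g_k\le n\mu^2/\lambda$, and dividing by $H=2\mu^2 n/(\lambda\eta)$ produces the average $\eta/2<\eta$, finishing the proof.

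The step I expect to be the main obstacle is precisely this bound on $\int L_k^2$: it rests on a careful use of both hypotheses simultaneously (the $L^1$ bound $\int L_k\le\mu$ alone or the support-size bound alone is insufficient, and a naive Cauchy--Schwarz runs the wrong way). I would attempt it by splitting $\Pi_k(E)$ into the level sets $\{L_k\le H\}$ and $\{L_k>H\}$, applying Chebyshev on the latter to control $\mes\{L_k>H\}$, and using $L_k\le H$ on the former to linearize $L_k^2\le HL_k$; the two contributions should combine to the desired $\mu^2/\lambda$-type bound in a way that exploits the minimality of $\lambda$ in the support condition. If this dichotomy is too crude, a refinement would replace $L_k^2/2$ by the sharper truncated quantity $L_k\min(L_k,H)$, which arises naturally from $\int_0^H|E_k\cap(E_k-h)|\,dh\le L_k\min(L_k,H)$, and optimize over the truncation level.
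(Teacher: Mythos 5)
Your reduction is sound exactly up to the point you flag: Fubini along the fibers gives $\int_0^H g_k(h)\,dh\le\tfrac12\int_{\Pi_k(E)}L_k(\widehat x_k)^2\,d\widehat x_k$, and everything would follow from a bound of the form $\int L_k^2\lesssim\mu^2/\lambda$. But that ``decisive step'' is not a technical hurdle to be finessed by a level-set dichotomy or a truncation: the inequality is false under the stated hypotheses. You only know $\int_{\Pi_k(E)}L_k\le\mu$ and $\mes_{n-1}\Pi_k(E)\ge\lambda$, and Cauchy--Schwarz gives the \emph{reverse} estimate $\int L_k^2\ge\bigl(\int L_k\bigr)^2/\mes_{n-1}\Pi_k(E)$, with near-equality only when $L_k$ is essentially constant on its support; if $L_k$ concentrates, $\int L_k^2$ is unbounded. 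Concretely, in $\R^2$ take $E=\bigl([0,\varepsilon]\times[0,M]\bigr)\cup\bigl([0,1]\times[0,\tfrac12]\bigr)$ with $\varepsilon M=\tfrac12$, so that $|E|\le 1=\mu$, $\mes_1\Pi_1(E)=M\ge 1$ and $\mes_1\Pi_2(E)=1=\lambda$. Then $g_2(h)\ge\varepsilon(M-h)_+=\tfrac12-\varepsilon h$, so for every $h\le H=2\mu^2n/(\lambda\eta)=4/\eta$ one has $g_2(h)\ge\tfrac12-4\varepsilon/\eta$, which exceeds any prescribed $\eta<\tfrac12$ once $\varepsilon$ is small. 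Your fallback bound $\int_0^Hg_k\le\int L_k\min(L_k,H)\le H\mu$ is consistent with this example and only yields an average of order $\mu$, not $\eta/2$; no averaging argument over $(0,H]$ with $H$ depending only on $\mu,\lambda,\eta,n$ can succeed.

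For comparison: the paper's proof follows the same averaging-plus-Chebyshev skeleton and asserts precisely the estimate you are missing, namely $\mes_{n-1}\Pi_k(E)\int_0^H\f_{E,k}(h)\,dh\le|E|^2$. It obtains it by replacing $\int_E L_k(\widehat x_k)\,dx$ with $|E|\int_\R\chi_E(y)\,dy_k$, i.e.\ by pulling $|E|$ out of an integral whose integrand still depends on the point of integration --- exactly the illegitimate move you correctly declined to make. The set above violates that displayed inequality and indeed the conclusion of the lemma itself with the stated bound $h\le 2\mu^2n/(\lambda\eta)$: the hypotheses $|E|\le\mu$ and $\mes_{n-1}\Pi_k(E)\ge\lambda$ control only the \emph{average} fiber length $\mu/\lambda$, and a single family of long fibers carrying a fixed fraction of the mass forces $\sum_kg_k(h)$ to stay large on an interval of $h$ whose length is not controlled by $\mu,\lambda,\eta,n$. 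So the gap you identified is genuine and, as the statement stands, cannot be closed; a correct version needs an additional hypothesis ruling out such concentration (e.g.\ an essential upper bound on the fiber lengths $L_k$, under which your truncation $\int L_k\min(L_k,\esup L_k)\le\mu\,\esup L_k$ does give the right kind of bound).
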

\begin{proof} Let $E\subset \R^n$  satisfy (\ref{transl1}). Denote
$$
\f_{E,k}(h)=|\{x\in E: x+he_k\in E\}|=\int_E\chi_E(x+he_k)\,dx\quad (h>0).
$$
For any $H>0$ and any $k=1,...,n$,  we have
$$
\begin{aligned}
\int_0^H\f_{E,k}(h)\,dh
&=\int_E dx\int_0^H\chi_E(x+he_k)\,dh\\
&\le |E|\int_\R \chi_E(y)\,dy_k.
\end{aligned}
$$
Integrating over projection $\Pi_k (E),$ we obtain
$$
\begin{aligned}
&\mes_{n-1}\Pi_k (E)\int_0^H\f_{E,k}(h)\,dh\\
&\le |E|\int_{\Pi_k E}d\widehat y_k\int_\R\chi_E(y)\,dy_k = |E|^2.
\end{aligned}
$$
 By (\ref{transl1}), this implies that
$$
\int_0^H\f_{E,k}(h)\le \frac{\mu^2}{\l}\quad (k=1,...,n).
$$
Denoting
$$
\f_E(h)=\sum_{k=1}^n\f_{E,k}(h),
$$
we have
$$
\int_0^H\f_E(h)\,dh\le \frac{\mu^2n}{\l}.
$$
Thus,
$$
\inf_{h\in [0,H]}\f_E(h)\le \frac{\mu^2n}{\l H}
$$
Setting $H=(2\mu^2n/(\l \eta),$ we obtain that there exists
$h\in(0,H]$ (depending on $\mu,\,\l,\,\eta,$ and $E$) such that
$\f_E(h)<\eta.$
\end{proof}

Throughout this paper $\mathcal {B}_r$ denotes the open ball with
radius $r>0$ centered at the origin. In the sequel  we shall use
{\it the standard mollifier} (see, e.g, \cite[p. 553]{Leo1})
\begin{equation}\label{moll}
 \f(x)=\begin{cases}
    c \operatorname{exp}(1/(|x|^2-1)) &\textnormal{if }\quad x\in \mathcal {B}_1 \\
    0 &\textnormal{if}\quad x\not\in \mathcal {B}_1,
  \end{cases}
\end{equation}
where $c>0$ is such that
$$
\int_{\R^n} \f(x)\,dx=1.
$$
Set for $\tau >0$
\begin{equation}\label{moll2}
 \f_\tau(x)=\frac1{\tau^n}\f\left(\frac{x}{\tau}\right).
 \end{equation}
 Then $\f_\tau(x)=0$ if $|x|>\tau,$ and
 \begin{equation}\label{moll3}
 \int_{\R^n} \f_\tau(x)\,dx=1.
 \end{equation}

 We shall  also use the following {\it cutoff function}
 \begin{equation}\label{cutoff}
 \eta(x)= (\f\ast g)(x),
 \end{equation}
 where $g$ is the characteristic function of the open ball $\mathcal {B}_2$. We have that $\eta\in C_0^\infty,$
 $\eta(x)=1$ if $|x|\le 1$ and $\eta(x)=0$ if $|x|\ge 3$.

 Let $f\in C^\infty(\R^n)\cap W_p^1(\R^n).$ For any $\g>0$ the function $f_\gamma(x)=f(x)\eta(\g x)$
 belongs to $C_0^\infty(\R^n)$. Moreover, it is easy to see that for
 any $\e>0$ there exists  $\gamma_0>0$ such that for all $0<\g\le
 \g_0$
 \begin{equation}\label{gamma}
 ||D_k f_\g||_p<||D_k f||_p + \e \quad (k=1,...,n)
\end{equation}
(see, e.g., \cite[p. 124]{St}).

In the sequel we use also the following remark concerning
capacities. Let $K\subset \R^n$ be a compact set. Denote by
$\mathfrak{P}(K)$ the set of all functions $f\in C_0^\infty(\R^n)$
such that $0\le f(x)\le 1$ for all $x\in \R^n$ and $f(x)=1$ in some
neighborhood of $K.$ It is well known that the set $\mathfrak{N}(K)$
in definitions (\ref{sobolcap}) and (\ref{besovcap}) may be replaced
by $\mathfrak{P}(K)$. Namely,
$$
\capa(K;W_p^1)=\inf\left\{\left(\sum_{k=1}^n||D_k f||_p\right)^p: f\in \mathfrak{P}(K)\right\}
$$
and
$$
\capa(K;B_{p,q}^\a)=\inf\{||f||_{b_{p,q}^\a}^p: f\in \mathfrak{P}(K)\}
$$
(see \cite[2.2.1]{Maz2}).

 \vskip 6pt
\section{The limit as $\a\to 1$}

In this section we prove the main result of the paper. As we have
already mentioned in the Introduction, this result was inspired by
the limiting relation (\ref{limiting1}) proved in \cite{BBM1}. We
observe that  the following slight modification of (\ref{limiting1})
holds: if  a function $f\in L^p({\mathbb R}^n)$ has a weak
derivative $D_jf \in L^p({\mathbb R}^n)$, then  $f\in
B_{p,q;j}^\a(\R^n)$ for any $1\le q <\infty$ and any $0<\a<1$, and
$$
\lim_{\a\to
1-0}(1-\a)^{1/q}\|f\|_{b^{\a}_{p,q;j}}=\left(\frac{1}{q}\right)^{1/q}
\|D_jf\|_p\,.
$$
This statement follows by standard arguments from Lemma \ref{NABLA} and inequality (\ref{First}) (see also \cite[Section 14.3]{Leo1}).

\begin{teo}\label{MAIN} Let $n\ge 2$,  $1\le p<n,$ and $1\le
q<\infty$. Then for any open set $G\subset \R^n$
\begin{equation}\label{main1}
\lim_{\a\to 1-0}(1-\a)^{p/q} \capa\left(G; B_{p,q}^\a\right)=
\left(\frac1q\right)^{p/q}\capa (G; W_p^1).
\end{equation}
\end{teo}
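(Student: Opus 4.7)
The plan is to split the equality into the $\limsup\le$ and $\liminf\ge$ directions; reducing to the case of bounded $G$ via the exhaustion $\capa(G;X)=\sup\{\capa(G';X):G'\Subset G\text{ open, bounded}\}$, one can always arrange that all test functions involved have support in a common bounded set.

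For the upper bound, fix any compact $K\subset G$ and any $f\in\mathfrak{P}(K)$. Lemma~\ref{partition} with the choice $T=T_\a=(1-\a)^{-1}$ gives $T_\a^{1-\a}\to 1$ and $(1-\a)^{1/q}T_\a^{-\a}=(1-\a)^{\a+1/q}\to 0$ as $\a\to 1-$, so that
$$\limsup_{\a\to 1-}(1-\a)^{1/q}\|f\|_{b^\a_{p,q;j}}\le q^{-1/q}\|D_j f\|_p.$$
Summing in $j$, raising to the $p$-th power, and using $\capa(K;B_{p,q}^\a)\le\|f\|_{b^\a_{p,q}}^p$ followed by $\inf_{f\in\mathfrak{P}(K)}$, one gets $\limsup_\a(1-\a)^{p/q}\capa(K;B_{p,q}^\a)\le(1/q)^{p/q}\capa(K;W_p^1)$. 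To pass the $\sup_{K\subset G}$ across the limsup one must make the error term $(1-\a)^{\a+1/q}\|f\|_p$ uniform in $K$: I truncate to $0\le f\le 1$ (which only decreases partial moduli of continuity, by the contraction argument behind~(\ref{posit})) and multiply by a fixed cutoff modeled on~(\ref{cutoff}) supported in a bounded enlargement of $G$, so that $\|f\|_p$ is uniformly bounded.

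For the lower bound it suffices to prove, for each compact $K\subset G$,
$$\liminf_{\a\to 1-}(1-\a)^{p/q}\capa(K;B_{p,q}^\a)\ge(1/q)^{p/q}\capa(K;W_p^1),$$
and then take $\sup_{K\subset G}$. Pick $\a_n\to 1-$ realizing the liminf and near-minimizers $f_n\in\mathfrak{N}(K)$ (truncated to $0\le f_n\le 1$ and supported in a common ball). Lemma~\ref{BESOV2} with $\t=\infty$ turns the uniform bound on $(1-\a_n)^{1/q}\|f_n\|_{b^{\a_n}_{p,q;j}}$ into the uniform $L^p$-modulus-of-continuity estimate $\o_j(f_n;t)_p\le C t^{\a_n}$; together with the uniform support and $\|f_n\|_p\le C$, Fréchet--Kolmogorov extracts a subsequence $f_n\to f^*$ in $L^p(\R^n)$ with $f^*\ge 1$ a.e.\ on $K$. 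The crux is the Fatou-type inequality $\liminf_n(1-\a_n)\|f_n\|^q_{b^{\a_n}_{p,q;j}}\ge q^{-1}\|D_j f^*\|_p^q$. Reusing the Hardy-inequality step in the proof of Lemma~\ref{BESOV2}, one has, for every $\d>0$,
$$(1-\a_n)\|f_n\|^q_{b^{\a_n}_{p,q;j}}\ge\frac{(1+\a_n)^q}{q}\bigl(\overline\o(\d)/\d\bigr)^q\d^{(1-\a_n)q},$$
where $\overline\o(\d)=(1/\d)\int_0^\d\o_j(f_n;u)_p\,du$. Since $\|f_n-f^*\|_p\to 0$ forces $\o_j(f_n;t)_p\to\o_j(f^*;t)_p$ uniformly in $t$, one passes first to $n\to\infty$ for fixed $\d$ and then $\d\to 0+$, invoking Lemma~\ref{MOD1} (decrease of $\overline\o/\d$) and Lemma~\ref{NABLA} (which yields $\overline\o(f^*;\d)/\d\to\|D_j f^*\|_p/2$), to deliver the claim with sharp constant $q^{-1}$. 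Summing in $j$ ($\liminf$ of a sum $\ge$ sum of $\liminf$'s for nonnegative terms) and raising to the $p$-th power, then using the standard equivalence that $\capa(K;W_p^1)$ coincides with the infimum of $(\sum_j\|D_j f\|_p)^p$ over $f\in W_p^1$ with $f\ge 1$ a.e.\ on $K$, completes the $\liminf$ bound.

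The main obstacle is the sequential lower-semicontinuity step above. The one-function limit relation $\lim_\a(1-\a)^{1/q}\|f\|_{b^\a_{p,q;j}}=q^{-1/q}\|D_j f\|_p$ is not directly applicable because the test functions $f_n$ move with $\a_n$. Both the compactness provided by Lemma~\ref{BESOV2} and the explicit Hardy-inequality estimate with the sharp factor $(1+\a_n)/2\to 1$ are essential to avoid losing a constant when passing to the limit. A secondary subtlety is the uniform-in-$K$ passage in the upper bound, which is why the truncation and the fixed cutoff from~(\ref{cutoff}) are used to trap the $\|f\|_p$ term in Lemma~\ref{partition}.
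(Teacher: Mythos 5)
Your architecture (split into $\varlimsup$/$\varliminf$, Lemma~\ref{partition} for the upper bound, Lemma~\ref{BESOV2} plus compactness, Fatou and Lemma~\ref{NABLA} for the lower bound) is the paper's, but both halves have genuine gaps at exactly the uniformity points you flag. For the upper bound, neither of your devices works. The reduction to bounded $G$ does not commute with the limit: $\capa(G;B^\a_{p,q})=\sup_{G'}\capa(G';B^\a_{p,q})$, and the $\varlimsup_\a$ of a supremum only \emph{dominates} the supremum of the $\varlimsup$'s, which is the wrong inequality. And multiplying a near-minimizer $f\in\mathfrak{P}(K)$ by a fixed cutoff $\eta$ does bound $\|f\eta\|_p$, but it injects $fD_j\eta$ into the gradient, and $\|fD_j\eta\|_p$ can be as large as $\|D_j\eta\|_p$ --- a fixed positive constant that cannot be made small when $p<n$, since $\|D_j\eta(\g\,\cdot)\|_p=\g^{1-n/p}\|D_j\eta\|_p$ blows up as $\g\to0$. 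So the sharp constant is lost. The paper's device is different and essential: truncate \emph{vertically}, $f_\e=\max(f-\e,0)/(1-\e)$, and use the weak-type Sobolev estimate of Lemma~\ref{MEASURE1} to get $|\{f>\e\}|\le A(n,p,G)\,\e^{-p^*}$ uniformly over all compacta $K\subset G$ and all near-minimizers; this controls $\|f_\e\|_p$ uniformly in $K$ without perturbing the gradients beyond the harmless factor $(1-\e)^{-1}$. This is precisely where $p<n$ enters.

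For the lower bound there are two problems. First, near-minimizers for $\capa(K;B^{\a_n}_{p,q})$ need not be supported in a common ball, so you get neither convergence in $L^p(\R^n)$ nor your claimed uniform convergence of the moduli of continuity. The paper again truncates vertically, uses the weak-type Besov estimate (Lemma~\ref{MEASURE2}) to bound $|\{f_\nu>\e\}|$ uniformly in $\nu$, extracts an a.e.-convergent subsequence by the Fr\'echet--Kolmogorov criterion on the cubes $Q_s$ plus a diagonal argument, and then uses only the Fatou inequality $\o_j(f_\e;\d)_p\le\varliminf_\nu\o_j(f_{\nu,\e};\d)_p$, which suffices; this part of your argument is repairable. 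Second, and more seriously, your limit function satisfies $f^*\ge1$ only a.e.\ on $K$, which is \emph{vacuous} whenever $|K|=0$ --- and measure-zero compacta are the interesting case for capacity. The ``standard equivalence'' with the infimum over $f\ge1$ a.e.\ on $K$ is false as stated (for $|K|=0$ that infimum is $0$). The paper avoids this by running the whole construction for the fattened compact set $K_\tau=\{x:\dist(x,K)\le2\tau\}\subset G$, so the limit equals $1$ a.e.\ on a set with nonempty interior; mollification at scale $\tau$ and a cutoff then yield a genuine $C_0^\infty$ competitor in $\mathfrak{N}(K)$. The mollification also covers the application of Lemma~\ref{NABLA}: for $p=1$ the bound $\o_j(f^*;\d)_1=O(\d)$ only gives bounded variation, not a weak derivative in $L^1$, so Lemma~\ref{NABLA} cannot be applied to $f^*$ directly as you do.
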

\begin{proof} Denote
\begin{equation}\label{main0}
\Lambda(\a)=(1-\a)^{1/q}\left[\capa(G; B_{p,q}^\a\right)]^{1/p},\quad 0<\a<1.
\end{equation}
First we shall show that
\begin{equation}\label{main2}
\varlimsup_{\a\to 1-0}\Lambda(\a)\le q^{-1/q}[\capa(G;W_p^1)]^{1/p}.
\end{equation}
We assume that $\capa(G;W_p^1)<\infty.$ Let $K\subset G$ be a
compact set and let $0<\e<1.$ There exists a function $f\in
C_0^\infty(\R^n)$ such that
\begin{equation}\label{main3}
\sum_{k=1}^n||D_k f||_p < (\capa(K;W_p^1)+\e)^{1/p},
\end{equation}
$0\le f(x)\le 1$ for all $x\in \R^n,$ and $f(x)=1$ in some
neighborhood of $K.$ Set $E_\e=\{x:f(x)>\e\}.$ By Lemma
\ref{MEASURE1},
$$
|E_\e|\le c_{p,n}\left(\sum_{k=1}^n||D_k f||_p\right)^{p^*}\e^{-p^*}, \quad p^*=\frac{np}{n-p}.
$$
Using (\ref{main3}) and taking into account that $K\subset G,$ we
obtain that
\begin{equation}\label{main4}
|E_\e|\le A \e^{-p^*},
\end{equation}
where $A\equiv A(n,p,G)=c_{p,n}[\capa(G;W_p^1))+1]^{p^*/p}$. We emphasize that $A$
doesn't depend on $K.$

There exists an open set  $H$  such that $K\subset H$ and $f(x)=1$
on $H$. Let $\rho$ be the distance from $K$ to the boundary of $H$
and let $0<\tau<\rho/2.$ Set
$$
f_\e(x)=\frac1{1-\e}\max(f(x)-\e, 0)\quad\mbox{and}\quad f_{\e,\tau}(x)= (f_\e\ast \f_\tau)(x),
$$
where $\f_\tau$ is defined by (\ref{moll2}). Then $f_\e\in
W_p^1(\R^n)$ and
$$
||D_k f_\e||_p\le \frac1{1-\e}||D_k f||_p \quad (k=1,...,n).
$$
 Furthermore, $D_k f_{\e,\tau}$=$(D_k f_\e)\ast\f_\tau.$ Thus, by (\ref{moll3}) and Young inequality,
\begin{equation}\label{main5}
||D_k f_{\e,\tau}||_p\le ||D_k f_\e||_p\le \frac1{1-\e}||D_k f||_p \quad (k=1,...,n).
\end{equation}
It is clear that $f_\e(x)=0$ if $x\not\in E_\e$ and $0\le f_\e(x)\le 1$ for all $x\in \R^n.$ First, by
 (\ref{moll3}) and (\ref{main4}), this imply that
\begin{equation}\label{main7}
||f_{\e,\tau}||_p\le||f_\e||_p\le |E_\e|^{1/p} \le (A\e^{-p^*})^{1/p},\quad A=A(n,p,G).
\end{equation}
We have also that $0\le f_{\e,\tau}(x)\le 1$ for all $x\in \R^n.$ Furthermore, $f_\e(x)=1$
on $H.$ This yields that
$f_{\e,\tau}(x)=1$ for all $x$ such that $\dist(x,K)<\tau.$
Indeed, if $\dist(x,K)<\tau$ and $|y|\le \tau$, then $x-y\in H$ and $f_\e(x-y)=1.$
Thus,
$$
f_{\e,\tau}(x)=\int_{B_\tau}\f_\tau(y) f_\e(x-y)\,dy=1.
$$
Observe also that
$f_{\e,\tau}\in C_0^\infty(\R^n).$ Taking into account these properties of $f_{\e,\tau}$, we have that
\begin{equation}\label{main6}
\capa\left(K;B_{p,q}^\a\right)\le ||f_{\e,\tau}||_{b_{p,q}^\a}^p.
\end{equation}
Applying Lemma \ref{partition} with $T=1$, we obtain
$$
(1-\a)^{1/q}||f_{\e,\tau}||_{b_{p,q}^\a} \le  \left(\frac1q\right)^{1/q}\left[\sum_{k=1}^n ||D_k f_{\e,\tau}||_p+2\left(\frac{1-\a}{\a}\right)^{1/q}||f_{\e,\tau}||_p\right].
$$
Using  (\ref{main5}) and (\ref{main3}) and taking into account that
$K\subset G,$ we have
$$
\sum_{k=1}^n||D_k f_{\e,\tau}||_p\le \frac1{1-\e}\left[\capa (G;W_p^1)+\e\right]^{1/p}.
$$
The last two inequalities, together with (\ref{main7}) and (\ref{main6}), yield that
$$
\begin{aligned}
&(1-\a)^{1/q}[\capa\left(K;B_{p,q}^\a\right)]^{1/p}\\
&\le \frac1{q^{1/q}(1-\e)}[\capa(G;W_p^1)+\e]^{1/p}+ A'\left(\frac{1-\a}{\a q}\right)^{1/q}\e^{-p^*/p},
\end{aligned}
$$
where $A'=2A(n,p,G)^{1/p}$ doesn't depend on $K.$ Taking
supremum over all compact sets $K\subset G$ and using notation
(\ref{main0}), we get
$$
\Lambda(\a)\le\frac{1}{q^{1/q}(1-\e)}[\capa(G;W_p^1)+\e]^{1/p}+ A'\left(\frac{1-\a}{\a q}\right)^{1/q}\e^{-p^*/p}.
$$
It follows that
$$
\varlimsup_{\a\to 1-0}\Lambda(\a)\le \frac{1}{q^{1/q}(1-\e)}[\capa(G;W_p^1)+\e]^{1/p}.
$$
Since $\e\in(0,1)$ is arbitrary, this implies (\ref{main2}).

Now we shall prove that
\begin{equation}\label{main8}
\varliminf_{\a\to 1-0}\Lambda(\a)\ge q^{-1/q}[\capa(G;W_p^1)]^{1/p}.
\end{equation}
Let $K\subset G$ be a compact set. Choose $\tau>0$ such that
\begin{equation}\label{main9}
K_\tau=\{x\in \R^n: \dist(x, K)\le 2\tau\}\subset G.
\end{equation}
Then $K_\tau$ is compact.

We assume that $\varliminf_{\a\to 1-0}\Lambda(\a)<\infty.$ There
exists an increasing sequence $\{\a_\nu\}$ of numbers $\a_\nu\in
(0,1)$ such that $\a_\nu\to 1$ and
\begin{equation}\label{main10}
\lim_{\nu\to\infty} \Lambda(\a_\nu)=\varliminf_{\a\to 1-0}\Lambda(\a).
\end{equation}
We assume also that
\begin{equation}\label{main11}
\Lambda(\a_\nu)\le \varliminf_{\a\to 1-0}\Lambda(\a)+1 \quad (\nu\in \N).
\end{equation}
 For any $\nu\in \N$ there
exists a function $f_\nu\in C_0^\infty(\R^n)$ such that $0\le
f_\nu(x)\le 1$ for all $x\in\R^n$, $f_\nu(x)=1$ for all $x\in
K_\tau$, and
$$
\left\|f_\nu\right\|_{b_{p,q}^{\a_\nu}}\le \capa\left(K_\tau;B_{p,q}^\a\right)^{1/p}+\frac1{\nu}.
$$
Since $K_\tau\subset G,$ then
$\capa\left(K_\tau;B_{p,q}^\a\right)\le
\capa\left(G;B_{p,q}^\a\right),$ and we have
\begin{equation}\label{main12}
(1-\a_\nu)^{1/q}\left\|f_\nu\right\|_{b_{p,q}^{\a_\nu}}\le \Lambda(\a_\nu) +\frac1{\nu}.
\end{equation}
We shall estimate $\o_j(f_\nu;\d)_p.$ Using (\ref{main12}) and Lemma
\ref{BESOV2} with $\theta=\infty$, we obtain that
\begin{equation}\label{main14}
\Lambda(\a_\nu)+\frac1\nu\ge q^{-1/q}\frac{1+\a_\nu}{2}\d^{-\a_\nu q}\sum_{j=1}^n\o_j(f_\nu;\d)_p
\end{equation}
for any $\d>0$ and any $\nu\in\N.$ In particular, (\ref{main14}) and
 (\ref{main11})  yield that
\begin{equation}\label{main15}
\sum_{k=1}^n\o_k(f_\nu;\d)_p\le A\d^{\a_\nu}, \quad \d>0,
\end{equation}
where $A=2q^{1/q}(\varliminf_{\a\to 1-0}\Lambda(\a)+2)$ depends only on $p,q,n,$ and $G.$ To get also a control of $L^p-$norms, we apply truncation
to the functions $f_\nu$. Let $0<\e<1/2.$ Set
$$
E_{\nu,\e}=\{x\in\R^n: f_\nu(x)>\e\}.
$$
Let $p^*=np/(n-p)$ and $p_\nu=np/(n-\a_\nu p)$; then $p_\nu<p^*.$ By
Lemma \ref{MEASURE2},
$$
|E_{\nu,\e}|\le (2p_\nu)^{p_\nu}\e^{-p_\nu}||f_\nu||_{b_{p,\infty}^{\a_\nu}}^{p_\nu}\le (2p^*)^{p^*}\e^{-p^*}||f_\nu||_{b_{p,\infty}^{\a_\nu}}^{p_\nu}.
$$
Thus, using (\ref{main15}), we obtain
\begin{equation}\label{main16}
|E_{\nu,\e}|\le A'\e^{-p^*} \quad (\nu\in \N),
\end{equation}
where  $A'$ depends only on $p,q,n,$ and $G,$ $A'=(2p^*A)^{p^*}.$  Set now
$$
f_{\nu,\e}(x)=\frac1{1-\e}\max (f_\nu(x)-\e, 0).
$$
It is easily seen that
\begin{equation}\label{main17}
\o_j(f_{\nu,\e};\d)_p\le \frac1{1-\e}\o_j(f_\nu;\d)_p, \quad \d\ge 0\,\,\,\,(j=1,...,n).
\end{equation}
Moreover, $0\le f_{\nu,\e}(x)\le 1$ for all $x\in\R^n,$
$f_{\nu,\e}(x)=1$ for all $x\in K_\tau,$ and $f_{\nu,\e}(x)=0$ for
$x\not\in E_{\nu,\e}.$ Applying (\ref{main16}), we get
\begin{equation}\label{main18}
||f_{\nu,\e}||_p^p\le |E_{\nu,\e}| \le A'\e^{-p^*} \quad (\nu\in \N).
\end{equation}
Besides, by (\ref{main15}) and (\ref{main17}),
\begin{equation}\label{main19}
\o(f_{\nu,\e};\d)_p\le 2A'\d^{\a_1}, \quad \d\in [0,1], \,\,\nu\in \N.
\end{equation}
By virtue of (\ref{main18}), (\ref{main19}), and the compactness
criterion (see \cite[p. 111]{Br2}), for any compact set $Q\subset\R^n$ there exists a subsequence of
$\{f_{\nu,\e}\}$ that converges in $L^p(Q).$ Therefore, by Riesz's theorem, for any compact set $Q\subset\R^n$ there exists a subsequence of
$\{f_{\nu,\e}\}$ that converges almost everywhere on $Q$. Let $Q_s=[-s,s]^n, \,\,\, s\in \N.$ A successive extraction of  subsequences gives strictly increasing sequences $\{\nu_m^{(s)}\}$ $(s=1,2,...)$
of natural numbers such that
$$
\{\nu_m^{(1)}\}\supset \{\nu_m^{(2)}\}\supset ... \supset \{\nu_m^{(s)}\}\supset ...
$$
and for each $s\in N$ the subsequence $\{f_{\nu_m^{(s)},\e}\}$ converges almost everywhere on $Q_s.$
Then the diagonal subsequence $\{f_{\nu_s^{(s)},\e}\}$ converges almost everywhere on $\R^n.$ For simplicity, we  assume that $\{f_{\nu,\e}\}$ itself
converges almost everywhere on $\R^n.$ Let
$$
f_\e(x)=\lim_{\nu\to \infty} f_{\nu,\e}(x).
$$
 Since $f_{\nu,\e}(x)=1$ on $K_\tau$  for any $\nu\in \N,$ then
\begin{equation}\label{main20}
 f_\e(x)=1 \quad \mbox{for  all}\quad x\in K_\tau.
\end{equation}
We have also that $0\le f_\e(x)\le 1$ almost everywhere on $\R^n.$
Further, by Fatou's lemma and (\ref{main18})
\begin{equation}\label{main1000}
||f_{\e}||_p^p\le A'\e^{-p^*}.
\end{equation}
 Fatou's lemma yields also that
for any $h>0$ and any $j=1,...,n$
$$
\int_{\R^n}|f_{\e}(x+he_j)-f_{\e}(x)|^p\,dx\le \varliminf_{\nu\to \infty}\int_{\R^n}|f_{\nu,\e}(x+he_j)-f_{\nu,\e}(x)|^p\,dx.
$$
Thus,
\begin{equation}\label{main21}
\o_j(f_\e;\d)_p\le \varliminf_{\nu\to \infty}\o_j(f_{\nu,\e};\d)_p, \quad \d\ge 0\,\,\,(j=1,...,n).
\end{equation}

Let  $\f_\tau$ be the mollifier defined by (\ref{moll2}).
 Set
$f_{\e,\tau}=f_\e\ast\f_\tau.$ Clearly, $0\le f_{\e,\tau}(x)\le 1$ for all $x\in \R^n$ and, by (\ref{moll3}) and
(\ref{main20}),
\begin{equation}\label{main22}
f_{\e,\tau}(x)=1 \quad\mbox{if}\quad \dist(x,K)<\tau.
\end{equation}
Besides, by Young inequality and (\ref{moll3}),
\begin{equation}\label{main23}
\o_j(f_{\e,\tau};\d)_p\le \o_j(f_\e;\d)_p, \quad\d\ge 0\,\,\, (j=1,...,n).
\end{equation}
Applying inequalities (\ref{main14}) and (\ref{main17}), we obtain
$$
\Lambda(\a_\nu)+\frac1{\nu}\ge \frac{(1+\a_\nu)(1-\e)}{2q^{1/q}}\d^{-\a_\nu}\sum_{j=1}^n\o_j(f_{\nu,\e};\d)_p.
$$
By (\ref{main10}),
(\ref{main21}), and (\ref{main23}), this implies that
\begin{equation}\label{main24}
\varliminf_{\a\to 1-0}\Lambda(\a)\ge\frac{1-\e}{q^{1/q}}\sum_{j=1}^n\frac{\o_j(f_{\e,\tau};\d)_p}{\d}
\end{equation}
for any $\d>0.$ Taking into account (\ref{main1000}), we have  $f_{\e,\tau}\in L^p(\R^n)\cap C^\infty(\R^n).$ Making $\d$ tend to zero and applying Lemma \ref{NABLA}, we obtain
\begin{equation}\label{main25}
\varliminf_{\a\to 1-0}\Lambda(\a)\ge\frac{1-\e}{q^{1/q}}\sum_{j=1}^n||D_j f_{\e,\tau}||_p.
\end{equation}
Let $\eta$ be the cutoff function defined by (\ref{cutoff})). Set $g(x)=f_{\e,\tau}(x)\eta(\g x),$ $\g>0.$ Then $g\in
C_0^\infty(\R^n)$ and $0\le g(x)\le 1$ for all $x\in \R^n$. If $\gamma$ is sufficiently small, then, by virtue of (\ref{main22}),
$g(x)=1$ if $\dist(x,K) <\tau.$ Moreover, $\g$ can be chosen so small that (see (\ref{gamma}))
$$
||D_j g||_p<||D_j f_{\e,\tau}||_p + \frac{\e}{n} \quad (j=1,...,n).
$$
Since
$$
\sum_{j=1}^n||D_j g||_p\ge \capa(K;W_p^1)^{1/p},
$$
 inequality (\ref{main25}) yields that
$$
\varliminf_{\a\to
1-0}\Lambda(\a)\ge\frac{1-\e}{q^{1/q}}[\capa(K;W_p^1)^{1/p}-\e].
$$
Taking into account that $\e\in (0,1)$ and a compact set $K\subset
G$ are arbitrary, we obtain inequality (\ref{main8}). Together with
(\ref{main2}), this gives (\ref{main1}).
\end{proof}

\begin{rem}\label{REMARK1} The statement of Theorem \ref{MAIN} fails to hold for {\it
compact}
sets. To show it, we use a theorem on capacity of a Cantor set
\cite[Section 5.3]{AH}. Let $1<p<n, \,\,\,p=q,$ and let $0<\a<1$. It
is known that in this case the $B_p^\a-$capacity is equivalent to
the Bessel capacity $C_{\a,p}$ \cite[p. 107]{AH}. Set
$$
l_k=((k+4)^2 2^{-kn})^{1/(n-p)}\quad (k=0,1,...).
$$
Then $l_{k+1}<l_k/2$ for all $k\ge 0.$
Further,
$$
\sum_{k=0}^\infty 2^{-kn}l_k^{p-n}<\infty \quad\mbox{and}\quad \sum_{k=0}^\infty 2^{-kn}l_k^{\a p-n}=\infty
$$
for any $0<\a<1.$ Let $E$ be the  Cantor set corresponding to the sequence $\{l_k\}$ defined in
\cite[(5.3.1)]{AH}. It follows by \cite[Theorem 5.3.2]{AH} that
$$
\capa(E;W_p^1)>0 \quad\mbox{and}\quad \capa(E;B_p^\a)=0
$$
for any $0<\a<1.$ Thus, equality (\ref{main1}) does not hold.
\end{rem}

\begin{rem}\label{REMARK2} We observe that if $n<p<\infty, \,\, n\in\N,$ or $p=n\ge 2$, then equality (\ref{main1}) is
trivially true. It is closely related to the fact that in these
cases the Sobolev capacity of a ball in $\R^n$ is equal to zero (see
\cite[p. 148]{Maz2}). For completeness, we give the corresponding
arguments in detail.

First, let $n<p<\infty.$ We consider the ball $\mathcal B_r$, $r>0.$
Let $\eta$ be the cutoff function defined by (\ref{cutoff}). Set
$f_\g(x)=\eta(\gamma x)$, where $0<\gamma<1/r.$ Then $f_\g\in
C_0^\infty(\R^n)$, $0\le f_\g(x)\le 1$ for all $x\in\R^n$, and
$f_\g(x)=1$ in some neighborhood of  $\overline{\mathcal B}_r.$ Further,
\begin{equation}\label{zamena}
||D_k f_\g||_p=\g^{1-n/p}||D_k \eta||_p\quad (k=1,...,n).
\end{equation}
This implies that $\capa (\mathcal B_r; W_p^1)=0$. Moreover, if
$n/p<\a<1,$ then we have also that
\begin{equation}\label{remark3}
\capa(\mathcal B_r; B_{p,q}^\a)=0
\end{equation}
for any $1\le q<\infty$. Indeed,
$$
\capa(\mathcal B_r; B_{p,q}^\a)\le ||f_\g||_{b_{p,q}^\a}^p.
$$
Thus, applying Lemma \ref{partition} and (\ref{zamena}),
we obtain
$$
\begin{aligned}
&||f_\g||_{b_{p,q;k}^\a}\le  q^{-1/q}\left[(1-\a)^{-1/q}T^{1-\a}||D_kf_\g||_p+2\a^{-1/q}T^{-\a }||f_\g||_p\right] \\
&\le  ((1-\a)q)^{-1/q}||D_k \eta||_p T^{1-\a}\gamma^{1-n/p} + 2(\a q)^{-1/q}||\eta||_p T^{-\a } \gamma^{-n/p}
\end{aligned}
$$
for any $T>0$ and any $1\le k\le n$. Setting $T=1/\gamma,$ we get
$$
||f_\g||_{b_{p,q;k}^\a}\le \left[((1-\a)q)^{-1/q}||D_k \eta||_p + 2(\a q)^{-1/q}||\eta||_p\right]\g^{\a-n/p}.
$$
 Since $0<\g<1/r$ is arbitrary and $\a>n/p$, this implies (\ref{remark3}). Thus, if $p>n$, then for any open set $G\subset \R^n$ both the capacities in relation (\ref{main1}) are equal to 0.

Let now $p=n\ge 2.$ We have $\capa (\mathcal B_r; W_n^1)=0 \,\,\,(r>0)$ \cite[p. 148]{Maz2}).
At the same time, it follows from Lemma \ref{MEASURE2} and
inequality (\ref{besov}) that
 $\capa(\mathcal B_r;B_{n,q}^\a)>0$ for any $0<\a<1$ and any $1\le q<\infty.$
Nevertheless, we shall show that
\begin{equation}\label{pequaln}
\lim_{\a\to 0} (1-\a)^{n/q}\capa(\mathcal B_r; B_{n,q}^\a)=0\quad (r>0).
\end{equation}
Let $\s=(n-1)/(2n)$ and set
$$
f_0(x)=\begin{cases}
    |\ln |x||^\s &\textnormal{if }\quad |x|\le 1 \\
    0 &\textnormal{if}\quad |x|>1.
  \end{cases}
$$
It is easy to see that $f\in W_n^1(\R^n).$ Let $\e>0.$ Set $f_1(x)=\min(\e f_0(x),1).$ Since $f_0(x)\to +\infty$ as $x\to 0,$
there exists a closed ball $U_\e$ centered at the origin such that $f_1(x)=1$ for all $x\in U_\e.$ There is $\g>0$  such that $\g x\in U_\e$
for all $x\in \overline{\mathcal B}_{r+1}.$
Set $f_2(x)=f_1(\g x).$ Then
$$
||D_k f_2||_n=||D_k f_1||_n\le \e ||D_k f_0||_n \quad (k=1,...,n)
$$
and
$$
||f_2||_n=\frac{||f_1||_n}{\g}\le \frac{\e ||f_0||_n}{\g}.
$$
Finally, we define $f=f_2\ast \f_{1/2}$ (see (\ref{moll2})). Then $f\in C_0^\infty(\R^n)$,
$f(x)=1$ in $\mathcal B_{r+1/2},$ and $0\le f(x)\le 1$ for all $x\in \R^n.$ Moreover,
\begin{equation}\label{zamech1}
||D_k f||_n\le \e ||D_k f_0||_n \quad (k=1,...,n)
\end{equation}
and
\begin{equation}\label{zamech2}
||f||_n\le \frac{\e ||f_0||_n}{\g}.
\end{equation}
First, this shows that $\capa(\mathcal B_r; W_n^1)=0$. Further,  applying Lemma \ref{partition} with $T=1$ and using (\ref{zamech1}) and (\ref{zamech2}), we obtain
$$
\begin{aligned}
(1-\a)^{1/q}||f||_{b_{n,q;k}^\a} &\le q^{-1/q}\left(||D_k f||_n + 2\left(\frac{1-\a}{\a}\right)^{1/q}||f||_n\right)\\
&\le \e q^{-1/q}\left(||D_k f_0||_n + \frac{2}{\g}\left(\frac{1-\a}{\a }\right)^{1/q}||f_0||_n\right).
\end{aligned}
$$
Since $\capa\left(\mathcal B_r; B_{n,q}^\a\right)\le ||f_\g||_{b_{n,q}^\a}^n,$ this implies that
$$
\varlimsup_{\a\to 0}(1-\a)^{1/q}\capa(\mathcal B_r; B_{n,q}^\a)^{1/n}\le \e q^{-1/q} \sum_{k=1}^n||D_k f_0||_n.
$$
By view of the arbitrariness of $\e>0$, we obtain (\ref{pequaln}). Thus, for $p=n\ge 2$ (\ref{main1}) also is trivially true.

\end{rem}

\begin{rem} The remaining case $p=n=1$ is also "degenerate". First, if a
set $E$ consists of one point, $E=\{x_0\},$ then $\capa(E;W_1^1)\ge
2.$ Indeed, if $f\in C_0^\infty(\R)$ and $f(x_0)=1,$ then
$$
-\int_{x_0}^\infty f'(x)\,dx=\int_{-\infty}^{x_0} f'(x)\,dx=1.
$$
Thus, $||f'||_1\ge 2.$ Further, let $K\subset \R$ be an arbitrary
compact set, $K\subset [-a,a]\,\,\,(a>0).$ Set
\begin{equation}\label{fa}
f_a(x)=\begin{cases}
    1 &\textnormal{if }\quad |x|\le a \\
    (a/x)^2 &\textnormal{if}\quad |x|>a.
  \end{cases}
\end{equation}
Then $f_a\in W_1^1(\R)$ and $||f'_a||_1=2.$ We obtain that
$\capa(K;W_1^1)=2$ for any  compact set $K\not=\emptyset$, and therefore
$\capa(G;W_1^1)=2$ for any non-empty open set $G\subset \R$.

Now we observe that for any $f\in L^1(\R)$
 and any $h>0$
 $$
 \begin{aligned}
 &\int_0^\infty |f(x)-f(x+h)|\,dx\\
 &\ge \int_0^\infty |f(x)|\,dx - \int_0^\infty |f(x+h)|\,dx=\int_0^h |f(x)|\,dx,
 \end{aligned}
 $$
 and similarly
$$
\int_{-\infty}^0 |f(x)-f(x+h)|\,dx\ge \int_0^h |f(x)|\,dx.
$$
Thus,
\begin{equation}\label{below}
\o(f;h)_1\ge 2 \int_0^h |f(x)|\,dx \quad (h>0).
\end{equation}
Let $I=[-h_0,h_0]\,\,\,(h_0>0).$ Let  $f\in L^1(\R)$ and $f(x)=1$ on
$I$. Then, by (\ref{below}), $\o(f;h)_1\ge 2h$ for all $0\le h\le
h_0.$ Thus, for any $1\le q<\infty$
$$
\begin{aligned}
(1-\a)||f||_{b_{1,q}^\a}^q&\ge (1-\a) \int_0^{h_0}h^{-\a q}\o(f;h)_1^q\frac{dh}{h}\\
&\ge 2^q(1-\a)\int_0^{h_0}h^{(1-\a) q}\frac{dh}{h} = \frac{2^q}{q}h_0^{(1-\a) q}.
\end{aligned}
$$
This implies that
$$
\varliminf_{\a\to 1-}(1-\a)^{1/q}\capa(G; B_{1,q}^\a)\ge 2q^{-1/q}
$$
for any  open set $G\subset \R$. On the other hand, assume that $G\subset
[-a,a]\,\,\,(a>0)$. Applying Lemma \ref{partition} to the
function (\ref{fa}), we have
$$
\begin{aligned}
(1-\a)^{1/q}||f_a||_{b_{1,q}^\a}&\le q^{-1/q}||f'_a||_1+2\left(\frac{1-\a}{\a q}\right)^{1/q}||f_a||_1\\
&=q^{-1/q} \left[2+8a\left(\frac{1-\a}{\a}\right)^{1/q}\right].
\end{aligned}
$$
It follows that
$$
\varlimsup_{\a\to 1-}(1-\a)^{1/q}\capa(G; B_{1,q}^\a)\le 2q^{-1/q}.
$$
Thus, for any open bounded set $G\subset \R$
$$
\lim_{\a\to 1-}(1-\a)^{1/q}\capa(G; B_{1,q}^\a)=q^{-1/q}\capa (G; W_1^1)= 2q^{-1/q}.
$$
\end{rem}

\section{The limit as $\a\to 0$}

In this section we study the behaviour of $B_{p,q}^\a-$capacities as
$\a\to 0$ (cf.  (\ref{mazya1}) and Remark \ref{final} below).
\begin{teo}\label{SECOND} Let $1\le p<\infty$ and $1\le q<\infty.$ Then for any compact set $K\subset \R^n$
\begin{equation}\label{second0}
\lim_{\a\to 0+}\a^{p/q}{\capa} (K;B_{p,q}^\a)=2n^p\left(\frac1q\right)^{p/q}|K|.
\end{equation}
\end{teo}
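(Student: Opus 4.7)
The plan is to prove both of the matching bounds $\limsup_{\a\to 0+}\a^{p/q}\capa(K;B_{p,q}^\a)\le 2n^p(1/q)^{p/q}|K|$ and $\liminf_{\a\to 0+}\a^{p/q}\capa(K;B_{p,q}^\a)\ge 2n^p(1/q)^{p/q}|K|$.

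For the upper bound, fix a non-negative $f\in C_0^\infty(\R^n)$ and $j\in\{1,\dots,n\}$. Once $t$ exceeds the diameter of $\operatorname{supp}(f)$ in the $j$th direction, the translates $f(\cdot)$ and $f(\cdot+te_j)$ have disjoint supports, so $\o_j(f;t)_p^q=2^{q/p}\|f\|_p^q$ exactly. The tail of $\a\int_0^\infty t^{-\a q-1}\o_j(f;t)_p^q\,dt$ produces $(2^{q/p}\|f\|_p^q/q)T^{-\a q}\to 2^{q/p}\|f\|_p^q/q$, while the head is dominated via (\ref{ocenka}) and vanishes under multiplication by $\a$. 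Summing over $j$ and raising to the $p$th power,
$$\lim_{\a\to 0+}\a^{p/q}\|f\|_{b_{p,q}^\a}^p=2n^p(1/q)^{p/q}\|f\|_p^p.$$
Given $\e>0$, pick an open $G\supset K$ with $|G|<|K|+\e$ and $f\in\mathfrak{P}(K)$ supported in $G$; since $\capa(K;B_{p,q}^\a)\le\|f\|_{b_{p,q}^\a}^p$ and $\|f\|_p^p\le|G|$, taking $\limsup$ and letting $\e\to 0$ yields the desired upper bound.

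For the lower bound assume $|K|>0$ (the case $|K|=0$ is immediate from the upper bound); then $\mes_{n-1}\Pi_k(K)\ge\l_0>0$ for every $k$ by Fubini. Fix $\e,\eta\in(0,1/2)$, and for each small $\a$ take a near-optimal $f=f_\a\in\mathfrak{P}(K)$ with $\|f\|_{b_{p,q}^\a}^p\le\capa(K;B_{p,q}^\a)+1$. The upper bound forces $\|f\|_{b_{p,q}^\a}\le C\a^{-1/q}$ for small $\a$, hence Lemma \ref{BESOV1} gives $\|f\|_{b_{p,\infty}^\a}\le C'$, and Lemma \ref{MEASURE2} yields $|E|\le M_0(\e,K)$ for $E:=\{f>\e\}$. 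Since $E\supset K$, Lemma \ref{TRANSLATION} supplies $h\le H:=2M_0^2n/(\l_0\eta)$ such that $|\{x\in E:x+he_k\in E\}|<\eta$ for every $k$. Set
$$A_1^{(k)}=\{f\ge 1-\e\}\cap\{f(\cdot+he_k)\le\e\},\quad A_2^{(k)}=\{f\le\e\}\cap\{f(\cdot+he_k)\ge 1-\e\}.$$
Each set has measure at least $|K|-\eta$ (for $A_2^{(k)}$ after the change of variables $y=x+he_k$), the two are disjoint, and the integrand of $\o_k(f;h)_p^p$ is at least $(1-2\e)^p$ on each, so $\o_k(f;h)_p^p\ge 2(1-2\e)^p(|K|-\eta)$. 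Monotonicity of $\o_k(f;\cdot)_p$ together with $h\le H$ yields $\a^{1/q}\|f\|_{b_{p,q;k}^\a}\ge q^{-1/q}[2(1-2\e)^p(|K|-\eta)]^{1/p}H^{-\a}$; summing over $k$, raising to the $p$th power, passing to $\liminf$, and finally letting $\e,\eta\to 0$, completes the proof.

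The main obstacle is producing, for each near-optimal $f$, a shift $h$ at which $\o_k(f;h)_p$ is already essentially at its limiting value $\sim(2|K|)^{1/p}$ while simultaneously $h$ is uniformly bounded in $\a$ and in $f$, so that the factor $h^{-\a p}$ tends to $1$. The naive choice $h=\operatorname{diam}(\operatorname{supp}(f))$ fails this uniformity; Lemma \ref{TRANSLATION} is the device that produces such a bounded $h$ given uniform control on the measures and projections of the level sets $\{f>\e\}$, and that uniform control is in turn secured by the just-proved upper bound combined with Lemmas \ref{BESOV1} and \ref{MEASURE2}.
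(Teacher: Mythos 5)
Your proposal is correct and follows essentially the same route as the paper: the upper bound via a smooth bump function supported in a small neighborhood of $K$ (the paper uses $\chi_{K_\tau}\ast\f_\tau$ and the bound (\ref{charact}) where you compute the exact limit of $\a^{1/q}\|f\|_{b_{p,q}^\a}$), and the lower bound via the same chain Lemma \ref{BESOV1} $\to$ Lemma \ref{MEASURE2} $\to$ Lemma \ref{TRANSLATION} applied to $\{f>\e\}$, followed by a two-level-set estimate of $\o_k(f;h)_p^p$ from below. Your bookkeeping with the disjoint sets $A_1^{(k)},A_2^{(k)}$ is a slightly cleaner variant of the paper's integration over $K$ and its shifted preimage, but the argument is the same.
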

\begin{proof} Denote
\begin{equation*}
\Lambda(\a)=\a^{1/q}\left[\capa(K; B_{p,q}^\a\right)]^{1/p},\quad 0<\a<1.
\end{equation*}
 First we prove that
\begin{equation}\label{second00}
\varliminf_{\a\to 0+}\Lambda(\a)\ge n2^{1/p}q^{-1/q}|K|^{1/p}.
\end{equation}
We assume that $|K|>0.$  It is clear that $\varliminf_{\a\to
0+}\Lambda(\a)<\infty.$ There exists a decreasing sequence
$\{\a_\nu\}$ of numbers $\a_\nu\in (0,1/2]$  with $\a_1=\min(1,
n/p)/2$ such that $\a_\nu\to 0$ and
\begin{equation}\label{second3}
\lim_{\nu\to\infty} \Lambda(\a_\nu)=\varliminf_{\a\to 0+}\Lambda(\a).
\end{equation}
We emphasize that  $\a_\nu<n/p$ for all $\nu\in \N.$ We may assume
that
\begin{equation}\label{second4}
\Lambda(\a_\nu)\le \varliminf_{\a\to 0+}\Lambda(\a)+1 \quad (\nu\in \N).
\end{equation}

 For any $\nu\in \N$ there exists a function $f_\nu\in
C_0^\infty(\R^n)$ such that $0\le f_\nu(x)\le 1$ for all $x\in
\R^n$, $f_\nu(x)=1$ for all $x\in K,$ and
$$
\Lambda(\a_\nu)\ge \a_\nu^{1/q}||f_\nu||_{b_{p,q}^{\a_\nu}}-\frac1{\nu}.
$$
Applying Lemma \ref{BESOV1} for $\theta=\infty$, we obtain that
\begin{equation}\label{second5}
\Lambda(\a_\nu)+\frac1{\nu}\ge q^{-1/q} t^{-\a_\nu}\sum_{j=1}^n\o_j(f_\nu;t)_p
\end{equation}
for any $t>0$ and any $\nu\in \N.$ In particular, by (\ref{second4})
and (\ref{second5}),
\begin{equation}\label{second6}
\sum_{j=1}^n\o_j(f_\nu;t)_p\le At^{\a_\nu}, \quad t>0,
\end{equation}
where $A$ depends only on $p, q, n,$ and $K.$

Let $0<\e<1.$ Set
$$
E_{\nu,\e}=\{x\in\R^n: f_\nu(x)>\e\}.
$$
Denote $p_\nu=np/(n-\a_\nu p)$. Then $p_\nu\le p_1.$ By Lemma
\ref{MEASURE2},
$$
|E_{\nu,\e}|\le (2p_\nu)^{p_\nu}\e^{-p_\nu}||f_\nu||_{b_{p,\infty}^{\a_\nu}}^{p_\nu}\le (2p_1)^{p_1}\e^{-p_1}||f_\nu||_{b_{p,\infty}^{\a_\nu}}^{p_\nu}.
$$
Thus, using (\ref{second6}), we obtain that
\begin{equation}\label{second7}
|E_{\nu,\e}|\le A'\e^{-p_1} \quad (\nu\in \N),
\end{equation}
where  $A'$ depends only on $p,q,n,$ and $K.$

Since $|K|>0,$ there exists a number $\l(K)>0$ such that
$$
\mes_{n-1}
\Pi_j (K)\ge \l(K)\quad\mbox{for all}\quad 1\le j\le n.
$$
 Further, $K\subset
E_{\nu,\e}$, and thus
$$
\mes_{n-1} \Pi_j (E_{\nu,\e})\ge \l(K)\quad (\nu\in\N,\,\, j=1,...,n).
$$
 Now we apply Lemma
\ref{TRANSLATION} with $\mu=A'\e^{-p_1}$, $\l=\l(K),$ and
$\eta=\e|K|.$ Set $H=2\mu^2n/(\l\eta)$. By Lemma \ref{TRANSLATION},
for any $\nu\in\N$ there exists $h_\nu\in
(0,H]$ such that
\begin{equation}\label{second8}
\sum_{j=1}^n|\{x\in E_{\nu,\e}:x+h_\nu e_j\in E_{\nu,\e}\}|<\e |K|.
\end{equation}
We emphasize that $H$ doesn't depend on $\nu.$ Denote
$$
K_j^{(\nu)}=\{x\in \R^n: x+h_\nu e_j\in K\}.
$$
Since $K\subset
E_{\nu,\e}\,\,\, (\nu\in \N),$ we derive from (\ref{second8}) that
for any $\nu\in \N$ and any $j=1,...,n$
$$
|\{x\in K: f_\nu(x+h_\nu e_j)\le \e\}|>(1-\e)|K|,
$$
$$
  |\{x\in K_j^{(\nu)}: f_\nu(x)\le \e\}|>(1-\e)|K|,
$$
 and $|K\cap
K_j^{(\nu)}|<\e |K|$. Thus, taking into account that $0\le
f_\nu(x)\le 1$ and $f_\nu(x)=1$ on $K$, we obtain
$$
\begin{aligned}
&\o_j(f_\nu;H)_p^p\ge \int_K|f_\nu(x)-f_\nu(x+h_\nu e_j)|^p\,dx\\
 &+ \int_{K_j^{(\nu)}}|f_\nu(x)-f_\nu(x+h_\nu e_j)|^p\,dx -\e|K|
\ge 2(1-\e)^{p+1}|K|-\e|K|
\end{aligned}
$$
for all $1\le j\le n.$ From here and (\ref{second5}),
$$
\begin{aligned}
\Lambda(\a_\nu)+\frac1{\nu}&\ge q^{-1/q} H^{-\a_\nu}\sum_{j=1}^n\o_j(f_\nu;H)_p\\
&\ge nq^{-1/q} H^{-\a_\nu}[(2(1-\e)^{p+1}-\e)|K|]^{1/p}.
\end{aligned}
$$
By (\ref{second3}), this implies that
$$
\varliminf_{\a\to 0+}\Lambda(\a)\ge nq^{-1/q}[(2(1-\e)^{p+1}-\e)|K|]^{1/p}.
$$
Since $\e\in (0,1)$ is arbitrary, this yields (\ref{second00}).

Now we shall prove that
\begin{equation}\label{second000}
\varlimsup_{\a\to 0+}\Lambda(\a)\le n2^{1/p}{q^{-1/q}}|K|^{1/p}.
\end{equation}
Set for  $\tau>0$
\begin{equation}\label{second11}
K_{\tau}=\{x\in \R^n: \dist(x, K)\le 2\tau\}.
\end{equation}
Fix $0<\e<1$ and choose $\tau>0$  such that
\begin{equation}\label{second111}
|K_{\tau}|<|K|+\e.
\end{equation}
 Let $\varphi_\tau$ be the standard
mollifier defined by (\ref{moll}). Set
$$f_{\tau}=\chi_{\tau}\ast\f_\tau,$$ where $\chi_{\tau}$ is the
characteristic function of the set $K_\tau.$ Then $f_\tau\in
C_0^\infty(\R^n)$, $0\le f_\tau(x)\le 1$ for all $x\in\R^n$, and
$f_\tau(x)=1$ for all $x$ such that $\operatorname{dist} (x,K)\le
\tau.$ Thus,
\begin{equation}\label{second12}
\capa(K; B_{p,q}^\a)\le ||f_\tau||_{ b_{p,q}^\a}^p.
\end{equation}
Using (\ref{charact}) and (\ref{moll3}), we have
\begin{equation}\label{second14}
\o_j(f_\tau;t)_p\le \o_j(\chi_{\tau};t)_p\le (2|K_{\tau}|)^{1/p}\le [2(|K|+\e)]^{1/p}.
\end{equation}
Applying (\ref{ocenka}) and (\ref{second14}), we obtain
$$
\begin{aligned}
&\a^{1/q}||f_\tau||_{b_{p,q}^\a}= \a^{1/q}\sum_{j=1}^n\left(\int_0^\infty t^{-\a q}\o_j(f_\tau;t)_p^q\frac{dt}{t}\right)^{1/q}\\
&\le \a^{1/q}\sum_{j=1}^n\left[||D_j f_\tau||_p\left(\int_0^1 t^{(1-\a)q}\frac{dt}{t}\right)^{1/q}
+\left(\int_1^\infty t^{-\a q} \o_j(f_\tau;t)_p^q\frac{dt}{t}\right)^{1/q}\right]\\
&\le \left(\frac{\a}{(1-\a)q}\right)^{1/q}\sum_{j=1}^n||D_j f_\tau||_p+n2^{1/p}q^{-1/q}(|K|+\e)^{1/p}.
\end{aligned}
$$
This estimate and (\ref{second12}) imply that
$$
\Lambda(\a)
\le \left(\frac{\a}{(1-\a)q}\right)^{1/q}\sum_{j=1}^n||D_j f_\tau||_p + 2^{1/p}q^{-1/q}n(|K|+\e)^{1/p}.
$$
It follows that
$$
\varlimsup_{\a\to
0+}\Lambda(\a) \le n2^{1/p}q^{-1/q}(|K|+\e)^{1/p}.
$$
Since $\e\in (0,1)$ is arbitrary, this implies (\ref{second000}).
Inequalities (\ref{second00}) and (\ref{second000}) yield
(\ref{second0}).
\end{proof}

\begin{rem}\label{OPEN} Generally, Theorem 4.1 fails to hold for
{\it open} sets. As in Section 3, we shall show it using Cantor sets \cite[Section
5.3]{AH}.

Let $f\in B_p^\a(\R^n)$ an let $\d_\l f(x)=f(\l x)$ $(\l>0)$ be a
dilation of $f$. It is easily seen that
\begin{equation}\label{open1}
||\d_\l f||_{b_p^\a}^p=\l^{\a p-n}||f||_{b_p^\a}^p.
\end{equation}

Assume that $p>1$ and $0<\a<\min(1, n/p).$ Recall that in this case
the $B_p^\a$-capacity is equivalent to the Bessel capacity
$C_{\a,p}$ \cite[p. 107]{AH}. There exists $k_0=k_0(\a)$ such that the sequence
$$l_k=(2^{-kn}(k+k_0)^2)^{1/(n-\a p)}$$ satisfies the condition $l_{k+1}\le l_k/2 \,\,\,(k=0,1,...).$
Moreover,
$$
\sum_{k=0}^\infty 2^{-kn}l_k^{\a p-n}<\infty.
$$
Let $K_\a$ be the Cantor set corresponding to the sequence $\{l_k\}$, defined in \cite[(5.3.1)]{AH}.
Then $|K_\a|=0$ and by  \cite[Theorem 5.3.2]{AH}, $\capa(K_\a; B_p^\a)>0.$
 For $\l>0,$ set
$$
K_{\a,\l}=\{x\in\R^n: \frac{x}{\l}\in K_\a\}.
$$
There exists a function $f_{\a,\l}\in C_0^\infty$ such that $0\le
f_{\a,\l}(x)\le 1$ for all $x\in\R^n,$ $f_{\a,\l}(x)=1$ in some neighborhood of
$K_{\a,\l}$, and
$$
||f_{\a,\l}||_{b_p^\a}^p\le \capa(K_{\a,\l}; B_p^\a)+1.
$$
Set $g_{\a,\l}(x)=f_{\a,\l}(\l x).$ Then $g_{\a,\l}(x)=1$ in some neighborhood of
$K_{\a}$. Thus, using (\ref{open1}), we obtain
$$
\begin{aligned}
\capa(K_\a; B_p^\a)&\le ||g_{\a,\l}||_{b_p^\a}^p=\l^{\a p-n}||f_{\a,\l}||_{b_p^\a}^p\\
&\le \l^{\a p-n}(\capa(K_{\a,\l}; B_p^\a)+1).
\end{aligned}
$$
From here,
$$
\capa(K_{\a,\l}; B_p^\a)\ge \l^{n-\a p}\capa(K_\a; B_p^\a) - 1.
$$
Since $\capa(K_\a; B_p^\a)>0,$ we can choose such $\l(\a)>0$  that
$$
\a\capa(K_{\a,\l(\a)}; B_p^\a)>1.
$$
Thus, for
any $0<\a<\min(1, n/p)$ there exists a compact set $E_\a$ such that
$$
|E_\a|=0 \quad \mbox{and}\quad \a\capa(E_\a; B_p^\a)>1.
$$
Let $j_0=[(\min(1, n/p))^{-1}]+1.$ Set
$E^*_j=E_{1/j}, \,\, j\ge j_0.$ Then
$$
\a \capa(E^*_j; B_p^\a)>1 \quad\mbox{for} \quad \a =\frac1j\,\, (j\ge j_0).
$$
Further, set  $E=\cup_{j=j_0}^\infty E^*_j.$
Then $|E|=0$.
Let $0<\e<1$.
There exists an open set $G$ such that $E\subset G$ and $|G|<\e.$ We have
$$
\a \capa(G; B_p^\a)\ge \a \capa(E^*_j; B_p^\a)>1 \quad\mbox{for} \quad \a =\frac1j\quad (j\ge j_0).
$$
Thus,
$$
\varlimsup_{\a\to 0}\a \capa(G; B_p^\a)\ge 1,
$$
and equality (\ref{second0}) does not hold for the set $G$.

\end{rem}

\begin{rem}\label{final} Our final remark concerns limiting relation (\ref{mazya1}). This relation was proved in \cite{MS}
for the seminorm
$$
\left(\int_{\R^n}\int_{\R^n} \frac{|f(x+h)-f(x)|^p}{|h|^{n+\a p}}\,dxdh.\right)^{1/p}.
$$
It is well known that this  seminorm is  equivalent to $||f||_{b_p^\a}.$
We shall briefly discuss the limiting behaviour of  $\a||f||_{b_p^\a}$.

Assume that a function $f$ belongs to $B_{p,q}^{\a_0}(\R^n)$ for some $0<\a_0<1.$ Then $f\in B_{p,q}^{\a}(\R^n)$
for any $0<\a\le \a_0.$ Moreover, it follows immediately from \cite[Lemma 1]{KMX} that
\begin{equation}\label{final1}
\lim_{\a\to  0}\a^{1/q} \|f\|_{b_{p,q}^{\a}}=q^{-1/q}\sum_{j=1}^n\o_j(f;+\infty)_p.
\end{equation}
 It is also easily seen that
$$
\lim_{h\to \infty} \int_{\R^n} |f(x+he_j)-f(x)|^p\,dx= 2 ||f||_p^p\quad (j=1,...,n).
$$
This equality and (\ref{posit}) imply that for a {\it nonnegative} $f$
\begin{equation}\label{final2}
\o_j(f;+\infty)=2^{1/p}||f||_p\quad (j=1,...,n)
\end{equation}
and thus by (\ref{final1})
\begin{equation}\label{final3}
\lim_{\a\to  0}\a^{1/q} \|f\|_{b_{p,q}^{\a}}=q^{-1/q}2^{1/p}n||f||_p\quad \mbox{if}\quad f\ge 0.
\end{equation}
However,  equalities (\ref{final2}) and (\ref{final3}) fail to hold in a general case. We consider the following simple example for $n=1.$ Let
$I_k=[k,k+1)$ $(k=0,1,...,2\nu).$ Set
$$
f_\nu(x)=\sum_{k=0}^{2\nu} (-1)^k\chi_{I_k}(x).
$$
Then $||f_\nu||_p=(2\nu+1)^{1/p}.$ Further,
$$
\begin{aligned}
&\int_\R |f_\nu(x+1)-f_\nu(x)|^p\,dx\\
&\ge \sum_{k=0}^{2\nu-1} \int_{I_k}|f_\nu(x+1)-f_\nu(x)|^p\,dx=2^{p+1}\nu.
\end{aligned}
$$
Thus,
$$
\o(f_\nu;1)_p\ge 2\left(\frac{2\nu}{2\nu+1}\right)^{1/p}||f_\nu||_p.
$$
It shows that the constant 2 on the right-hand side of (\ref{First})
is optimal, and thus  (\ref{final2}) and (\ref{final3}) may not be true.

\end{rem}

\end{document}